\documentclass[a4paper]{article}
\usepackage{amsmath}
\usepackage{amsfonts}
\usepackage{amssymb}
\usepackage{graphicx}
\pagestyle{plain}
\usepackage{amsthm}
\usepackage{ascmac}
\usepackage{comment}
\usepackage{ulem}
\usepackage{color}

\setlength{\textwidth}{16cm}
\setlength{\textheight}{22.5cm}
\setlength{\oddsidemargin}{0.0cm}
\setlength{\evensidemargin}{0.0cm}
\setlength{\topmargin}{0.0cm}

\theoremstyle{definition}
\newtheorem{theorem}{Theorem}[section]
\newtheorem{corollary}{Corollary}[section]
\newtheorem{defn}{Definition}[section]
\newtheorem{remark}{Remark}[section]
\newtheorem{lemma}{Lemma}[section]
\newtheorem{proposition}{Proposition}[section]
\newtheorem{assumption}{Assumption}[section]
\newtheorem{ex}{Example}[section]

\newcommand{\ncd}{\newcommand}

\newcommand{\beqn}{\begin{equation}}
\newcommand{\eeqn}{\end{equation}}

\ncd{\bpf}{\begin{proof}}
\ncd{\epf}{\end{proof}}

\ncd{\ds}{\displaystyle}

\ncd{\z}{{\mathbb Z}}
\ncd{\zd}{{\mathbb Z}^d}
\ncd{\n}{{\mathbb N}}
\ncd{\nd}{{\mathbb N}^d}
\ncd{\R}{{\mathbb R}}
\ncd{\rd}{{\mathbb R}^d}
\ncd{\Q}{{\mathbb Q}}

\ncd{\calA}{{\mathcal A}}
\ncd{\calB}{{\mathcal B}}
\ncd{\calC}{{\mathcal C}}
\ncd{\calD}{{\mathcal D}}
\ncd{\calE}{{\mathcal E}}
\ncd{\calF}{{\mathcal F}}
\ncd{\calG}{{\mathcal G}}
\ncd{\calH}{{\mathcal H}}

\ncd{\calK}{{\mathcal K}}
\ncd{\calL}{{\mathcal L}}
\ncd{\calM}{{\mathcal M}}

\ncd{\calP}{{\mathcal P}}

\ncd{\calR}{{\mathcal R}}
\ncd{\calS}{{\mathcal S}}
\ncd{\calT}{{\mathcal T}}

\ncd{\calZ}{{\mathcal Z}}

\ncd{\bfA}{{\mathbf A}}
\ncd{\bfB}{{\mathbf B}}
\ncd{\bfM}{{\mathbf M}}
\ncd{\bfX}{{\mathbf X}}
\ncd{\bfP}{{\mathbf P}}
\ncd{\bfS}{{\mathbf S}}

\ncd{\prob}{{\mathbb P}}
\ncd{\Ex}{{\mathbb E}}

\ncd{\wh}{\widehat}
\ncd{\wt}{\widetilde}
\ncd{\ol}{\overline}

\newcommand{\indi}{1\hspace{-.2em}{\rm l}}
\ncd{\supp}{{\rm supp}}

\allowdisplaybreaks[4]
\makeatletter

\@addtoreset{equation}{section}
\makeatother

\title{Spread rate of catalytic branching symmetric stable processes}

\author{Yasuhito Nishimori\thanks{
University of Yamanashi, Faculty of Education, 
4-4-37, Takeda, Kofu, Yamanashi, 400-8510, Japan, 
e-mail:
\texttt{y.nishimori@yamanashi.ac.jp}}
\thanks{This work was supported by JSPS KAKENHI Grant Number JP22K03427.}
}

\begin{document}

\maketitle

\noindent
{\bf keywords: Branching $\alpha$-stable process, Spread rate, Kato class measure}  \\
 \\
{\bf 2020 Mathematics Subject Classification: Primary 60J80; Secondly 60G52} .

\begin{abstract}
We study the growth order of the maximal displacement of 
branching symmetric $\alpha$-stable processes. 
We assume the branching rate measure $\mu$ is in the Kato class and 
$\mu$ has a compact support on $\rd$. 
We show that the maximal displacement exponentially grows and 
its order is determined by the index $\alpha$ and the spectral bottom 
of the corresponding Schr\"odinger-type operator. 
\end{abstract}

\section{Introduction}
\subsection{Model and subject}
Let $0 <\alpha < 2$. 
We consider branching symmetric $\alpha$-stable processes with 
splitting on a compact set in $\rd$. 
The branching processes describe stochastic models of 
particle systems with time evolution. 
A particle starts at $x_0 \in \rd $ and 
moves according to the law of a symmetric $\alpha$-stable process 
$\{ X_t , t \ge 0 \}$. 
At an exponential random time $T$, 
the first particle splits into $n$-particles with probability 
$p_n (X_{T-})$. 
Here, the random time $T$ is called 
the first splitting time and $T$ is exponentially distributed 
on the initial particle path such that  
\[
	\prob _{x_0} \left( T > t \mid X_s , s \ge 0 \right)
	= 
	e^{-A_t ^\mu} , 
\]
where $A_t ^\mu$ is the positive continuous additive functional 
(PCAF for short and see \cite[Section 5.1]{FOT11} for detail) 
associated with the branching rate measure $\mu$.  
Then, 
$\{ \{ p_n (x) \}_ {n \ge 1} \mid x \in \rd\}$ is called 
the offspring distribution and  $p_n (x)$ gives the probability of 
splitting into $n$-particles at $x$, 
where $x$ is the place that the particle splits. 
For example, 
$A_t ^\mu = \int _0 ^t \indi _B (X_s) \, d s$, 
when $\mu $ is given by $ \indi _B (x) \, d x$ 
for the indicator function $\indi _B$ of set $B$ and 
the Lebesgue measure $d x$  on $\rd$. 
Particularly, if $\mu$ is the Lebesgue measure, 
then $A_t ^\mu = t$, i.e.,  
$T$ has the exponential distribution with mean one. 
Since measure $\mu$ controls the frequency of branches, 
$\mu$ is called the `catalyst' in the context of particle systems. 
On the other hand, 
the branching process is spatially homogeneous, 
when the splitting mechanism is independent of the space, 
that is, $\mu = dx$ and $p_n (x) = p_n$ for all $n \ge 1$.  

Let ${\mathcal H}$ denote the Schr\"odinger-type operator: 
\begin{equation}
	\calH:= 
	\frac{1}{2} (- \Delta)^{\alpha/2} - (Q-1) \mu 
	\quad \text{on } \ L^2 (\rd ).
	\label{eq:Sch} 
\end{equation}
Here, $Q (x) = \sum _{n \ge 1} ^\infty n p_n (x)$ and 
$(Q-1) \mu$ is the meaning of $(Q(x)-1) \mu (d x)$. 
We write $\lambda ( (Q-1) \mu )$, 
or simply $\lambda$, for the spectral bottom of $\calH$. 
In this paper, 
we assume that 
the branching rate measure $\mu$ has a compact support and 
$R (x) \mu (d x)$ belongs to 
the Kato class (Definition \ref{def:Kato} below), 
where $R (x) = \sum _{n \ge 1} n(n-1) p_n (x)$. 
The PCAF $A_t ^\mu$ increases only when the initial particle moves 
on the compact support of $\mu$. 
It follows that the branches occur only on this compact region. 
Furthermore, we assume that $\lambda < 0$.

Let us denote the configuration of particles at time $t$ by 
\[
	\bfX _t = \left( \bfX _t ^u , u \in Z_t \right) .
\]
Here, $Z_t$ is the set of all particles at time $t$ and  
$\bfX_t ^u \in \rd$ is the meaning of the position of particle $u \in Z_t$. 
We define the maximal displacement at time $t$ by 
\begin{equation*}
	L_t := 
	\sup _{u \in Z_t} \left| \bfX_ t ^u \right| . 
	\label{Lt}
\end{equation*}
When $d=1$, 
this corresponds to the largest distance from the origin of either the rightmost 
or leftmost particles, respectively.
Our interest is the pathwise time evolution of $L_t$ as $t \to \infty$.

\subsection{Background and motivation}
We can consider several branching Markov processes through \eqref{eq:Sch}. 
The first term of $\calH$ represents the law of the one-particle motion and 
the second term controls the branching rule (cf. \cite{INW-I, INW-II, INW-III}). 
The branching Brownian motion (BBM for short) 
is $\alpha = 2$ in \eqref{eq:Sch}. 
In particular, 
the BBMs are the basic models, 
if $d=1$, $Q \equiv 2$ and the branching rate measure is 
given by the Lebesgue measure or the Dirac measure $\delta _0$. 
Since we can consider that $\mu$ contains $Q-1$ in \eqref{eq:Sch}, 
we here ignore $Q-1$ and we treat only ${\mathcal H} = \frac{1}{2}(- \Delta)^{\alpha /2} - \mu$
to describe our background, conveniently.

McKean \cite{Mc75} showed that 
the distribution function of the rightmost particle 
is a solution of the F-KPP equation and 
the distribution function, 
which is scaled by the space factor, 
converges to a solution of a traveling wave equation, 
for the spatially homogeneous BBM. 
This research is the point of departure for 
the rightmost (and leftmost) particle and 
the maximal displacement $L_t$. 
The scaling factor was clarified by Bramson \cite{B78}. 
His result also precisely indicates the pathwise time evolution of the spread rate. 
Then, on inhomogeneous BBMs, 
the relationships between the spread rate and $\mu$ 
were studied by several researchers. 
There are mainly two ways of investigating the growth rate of the maximal displacement
(or the rightmost particle). 
One is to investigate the pathwise growth order of $L_t$, 
the other is to determine the scaling factor $R(t)$ such that 
the distribution of $L_t - R(t)$ converges to a certain distribution. 
If $L_t - R (t)$ converges to the distribution, 
then we can regard that $R(t)$ approximates $L_t$ in distribution. 
Erickson \cite{E84} investigated the case of $\mu = V(x) \,d x$, 
where $V$ degenerates to zero at infinity. 
He revealed the pathwise time evolution of the rightmost particle. 
He showed that the growth is a time linear 
and its coefficient is determined by the eigenvalue 
of the corresponding Schr\"odinger operator. 
Then, Bocharov and Harris \cite{BH14} researched about  
$\mu = \beta \delta _0$, $\beta >0$. 
Shiozawa \cite{S18} extended their results to the maximal displacement $L_t$ 
for the BBMs on $\rd$ ($d \ge 1$), 
when $\mu$ belongs to the Kato class and 
it has a compact support. 
Bocharov and Wang \cite{BW} studied one-dimensional 
spatially homogeneous BBMs adding a point catalyst 
($\mu = \beta dx + \beta _0 \delta _0$ for positive constants $\beta $ and $\beta _0$). 
These \cite{BH14, S18, BW} 
are concerned with the pathwise evolution of $L_t$, 
and these are classified into \cite{E84}. 
On the other hand, 
Lalley and Sellke \cite{LS88} researched about 
the limiting distribution of rightmost particle 
for the similar setting to \cite{E84}. 
Lalley and Sellke \cite{LS89} also investigated the case of  
$\mu = (b + \beta (x)) \, d x$, 
where $b $ is a positive constant and 
$\beta (x)$ is a non-negative, continuous, integrable function. 
Then, Bocharov and Harris \cite{BH16} for $\mu = \beta \delta _0$, and 
Nishimori and Shiozawa \cite{ShioandN} for the Kato class mesure, 
computed the appropriate $R(t)$ 
and determined the limiting distribution of $L_t - R(t)$, respectively. 
As a matter of course, 
the coefficient of leading term of $R(t)$ coincides with 
a.s. limit of the spread rate $L_t /t$ as $ t \to \infty$. 
We can confirm this consistency 
between \cite{BH14} and \cite{BH16}, 
between \cite{S18} and \cite{ShioandN}, respectively.

Recently, branching processes with the Kato class measures 
as branching rate measures, 
including singular measures with respect to the Lebesgue measures 
such as the Dirac measure, 
have been intensively studied. 
The trigger is \cite{BH14} by Bocharov and Harris. 
They computed the expectation of the first moment of the number of particles 
for the one-dimensional BBM with a point catalyst at the origin 
by using the Many-to-One lemma (cf. \cite{HaH}). 
According to this lemma, 
they altered the moment to 
the Fynman-Kac functional given by the local time at the origin. 
Since it is well known that the marginal distribution of the Brownian motion 
and the local time, by using these, 
Bocharov and Harris directly computed the above moment 
and its asymptotic behavior. 
As a result, they proved the pathwise spread rate of the rightmost particle. 

By \eqref{eq:Sch}, 
the branching $\alpha$-stable process with the Kato class measure, 
which has a compact support, 
is a natural extension of BBMs 
as in \cite{BH14, S18}. 
Recently, Shiozawa \cite{S22} obtained the limiting distribution of 
$L_t - R _\alpha(t)$ 
for this branching symmetric $\alpha$-stable processes. 
He improved analytic tools for the moment calculations 
and revealed the asymptotic behavior of $L_t$ in distribution. 
Moreover, his result shows that 
the asymptotic behavior of $L_t$ is different from the branching Brownian case. 
Motivated by his work, 
we determine the pathwise growth order of $L_t$ as $t \to \infty$, 
for the branching $\alpha$-stable processes. 
Our argument bases on \cite{BH14, S18}. 
Here, the moment calculation and its asymptotic order, 
which were given by Shiozawa \cite{S22}, 
play important roles. 

The symmetric $\alpha$-stable processes have 
the `heavy-tail' in the meaning of 
\eqref{eq:shio3} (see also \eqref{eq:distrXt}). 
On the other hand, 
the Brownian motions have the `light-tail', 
that is, these tail distributions exponentially decay. 
Since each particle of the branching stable process 
spreads more rapidly than the BBM, 
we easily expect that $L_t$ grows faster for the branching stable process 
than the BBM. 
Our standpoint is to appear the asymptotic behavior of $L_t$, 
for the branching stable process as the heavy-tailed process. 
Our results also make clear the difference between 
the BBMs and the branching stable processes 
for the growth order of $L_t$ as $t \to \infty$.  

Our works are focusing on the pathwise growth order of $L_t$, 
and these are classified as \cite{E84}. 
Similar to the contrast between \cite{E84} and \cite{LS88}, 
as for research of $L_t$ in the distribution sense, 
Lalley and Shao \cite{LSh} revealed the thickness of the tail distribution of 
the rightmost particle, 
for the spatially homogeneous branching stable processes. 
Bulinskaya \cite{Bulin2021} (see also \cite{Bulin2018}) 
for the continuous time catalytic branching random walks, 
and Shiozawa \cite{S22} for the aforementioned branching stable processes,  
they determined the appropriate scaling factors and 
the limiting distributions of the maximal displacement, respectively. 
In particular, Ren et al. \cite{RSZ}, 
for the spatially homogeneous branching L\'evy process, 
showed the limiting distribution of all particles ordered from right to left, 
not only the rightmost particle. 
Bhattacharya et al. \cite{Bhatt} showed the convergence of 
the point process associated with a branching random walk 
having regular varying steps 
and they revealed the limiting process.

\subsection{Main results}
We consider the branching $\alpha$-stable process on $\rd$ such that 
the branching rate measure $\mu$ belongs to the Kato class and 
has a compact support. 
In addition, we make Assumption \ref{ass1} on $\mu$. 
Our model corresponds to one, 
which is used by \cite{S22}. 

Our main claim (Corollary \ref{cor}) is that 
the maximal displacement $L_t$ grows exponentially 
and the exponent is determined by $\lambda$ and $\alpha$ as follows: 
\begin{equation}
	\lim _{t \to \infty} 
	\frac1t \log L_t 
	= \dfrac{-\lambda}{\alpha} , \qquad 
	\prob _x (\cdot \mid M_\infty >0)\text{-a.s.}, 
	\label{eq:main-in-intro}
\end{equation}
where $M_\infty$ is the limit of the martingale defined by \eqref{mart} below. 

In the Brownian cases, 
the leading term of the maximal displacement grows in linear time 
(e.g. \cite{BH14, S18}), that is, 
$L_t$ is the same order as $\sqrt{-\lambda /2} t$, 
almost surely, when $t \to \infty$. 
In the stable case, 
since each particle jumps far according to the heavy-tailed distribution, 
$L_t$ spreads faster than the Brownian cases. 
Our result \eqref{eq:main-in-intro} supports this. 
The constant $-\lambda / \alpha$ in \eqref{eq:main-in-intro} 
is consistent with the limit of $t^{-1} \log R_\alpha (t)$, 
where $R_\alpha (t) = (e^{-\lambda t} \kappa)^{1/\alpha}$, 
for $\kappa >0$, 
is the scaling factor, which is mentioned in the previous subsection. 

To show \eqref{eq:main-in-intro}, 
we follow the same argument as in \cite{BH14, S18} 
for the Brownian cases. 
Let $B(R)$ be the sphere with radius $R$ centered at the origin. 
We write $N_t ^R$ for the number of particles 
which stay $B (R)^c$. 
We consider $B (\kappa (t))$ the zone where particles remain. 
Theorem \ref{theo:main} provides that
if $\kappa (t)$ is so large, then there may be no particle on $B (\kappa (t))^c$; 
if $\kappa (t)$ is so less, then $N_t ^{\kappa (t)}$ exponentially goes to infinity. 
\begin{theorem}\label{theo:main}
We assume that $\lambda < 0$. 
For $\delta > 0$, 
let us set $\kappa _\delta (t) = e^{\delta t} a(t)$, 
where $a(t) > 0$ is monotone increasing 
and $t^{-1} \log a(t) \to 0$, $t \to \infty$. 
\begin{enumerate}
\item[(i)] For any $\delta > - \lambda / \alpha$ and $x \in \rd$, 
\[
	\lim _{t \to \infty} N_t ^{\kappa _\delta (t)} 
	= 0, \qquad \prob _x \text{-a.s.}
\]

\item[(ii)] 
When $\prob _x (M_\infty > 0) > 0$, 
for any $\delta \in (0, -\lambda /\alpha)$, 
\[
	\lim _{t \to \infty} 
	\frac1t \log N_t ^{\kappa _\delta (t)} 
	= -\lambda - \alpha \delta, \qquad 
	\prob _x (\cdot \mid M_\infty > 0) \text{-a.s.}
\]
\end{enumerate}
\end{theorem}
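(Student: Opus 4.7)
My plan follows the two-moment strategy developed for branching Brownian motion in \cite{BH14, S18} and adapts it to the heavy-tailed stable setting via the estimates of Shiozawa \cite{S22}. The key analytic input is the first-moment identity from the Many-to-One lemma,
\[
\Ex_x[N_t^R] = \Ex_x\!\left[\exp\!\left(\int_0^t (Q(X_s)-1)\,dA_s^\mu\right) \indi_{\{|X_t|>R\}}\right],
\]
combined with Schr\"odinger-semigroup asymptotics at the spectral bottom $\lambda$ and the heavy-tailed one-particle estimate $\prob_x(|X_t|>R) \sim c\, t R^{-\alpha}$; these ingredients, already present in \cite{S22}, should give $\Ex_x[N_t^R] \sim C(x) e^{-\lambda t} R^{-\alpha}$ in the relevant regime, so that for $R = \kappa_\delta(t)$ one has $\Ex_x[N_t^{\kappa_\delta(t)}] \sim C(x) a(t)^{-\alpha} e^{(-\lambda-\alpha\delta)t}$.

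For part (i), the exponent $-\lambda-\alpha\delta$ is strictly negative, so Markov's inequality plus Borel--Cantelli along integer times $n$ yields $N_n^{\kappa_\delta(n)}=0$ eventually. The stable-specific step, not present in \cite{S18}, is the interpolation to continuous time: a particle inside $B(\kappa_{\delta'}(n))$ at time $n$, for a chosen intermediate $\delta' \in (-\lambda/\alpha,\delta)$, can jump outside $B(\kappa_\delta(t))$ during $[n,n+1]$. I would control this by combining the a.s.\ growth bound $|Z_n|=O(e^{(-\lambda+\varepsilon)n})$ coming from the martingale \eqref{mart} with the maximal jump estimate $\prob(\sup_{s\le 1}|X_s|>r) \le Cr^{-\alpha}$; since the gap $\kappa_\delta(n+1)-\kappa_{\delta'}(n)$ grows like $e^{\delta n}$, a union bound over the $|Z_n|$ particles produces a summable sequence and a second Borel--Cantelli closes the argument.

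For part (ii), the upper bound comes from the same first-moment / Markov scheme at integer times using the monotonicity $N_t^{\kappa_\delta(t)} \le N_t^{\kappa_\delta(n)}$ on $[n,n+1]$. The matching lower bound is the main task and I would obtain it via the second-moment method: the moment bounds of \cite{S22} should give $\Ex_x[(N_t^{\kappa_\delta(t)})^2] \le C\bigl(\Ex_x[N_t^{\kappa_\delta(t)}]\bigr)^2$, and Paley--Zygmund then yields $\prob_x(N_t^{\kappa_\delta(t)} \ge c\,\Ex_x[N_t^{\kappa_\delta(t)}]) \ge c'>0$. To upgrade this to an almost-sure statement on $\{M_\infty > 0\}$, I would decompose at a large deterministic time $s$: conditionally on $\calF_s$ the future evolves as $|Z_s|$ independent subtrees, and $|Z_s| \asymp e^{-\lambda s}M_s$ on survival; applying Paley--Zygmund to each subtree over $[s,t]$ guarantees that at least one produces $\ge e^{(-\lambda-\alpha\delta-\varepsilon)t}$ particles outside $B(\kappa_\delta(t))$, with conditional probability $\to 1$ as $s \to \infty$. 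Sending $\varepsilon \downarrow 0$ through a countable sequence and using $M_s \to M_\infty$ transfers the bound to $\{M_\infty>0\}$ almost surely.

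The principal obstacle is the second-moment estimate underpinning the lower bound in part (ii): although the branching structure mirrors \cite{S18}, the Gaussian tail must be replaced throughout by the polynomial stable tail, so constants, scaling regimes, and the spine / Many-to-Two decomposition must be tracked in tandem with the Kato-class Feynman--Kac asymptotics of \cite{S22}. A secondary, stable-specific difficulty is the continuous-to-discrete-time interpolation in part (i), which relies on the heavy-tail maximal-jump estimate rather than Gaussian concentration.
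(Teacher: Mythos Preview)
Your outline for part~(i) and for the upper bound in part~(ii) is essentially the paper's argument: first-moment control via Many-to-One and \eqref{eq:3-1}, Chebyshev and Borel--Cantelli along integers, and an interpolation to continuous time via the running maximum of the stable process (the paper packages this as Lemma~\ref{lem:limsup}, using the historical Many-to-One Lemma~\ref{lem:histrical-many-to-one} together with Lemma~\ref{lem:tailmax}).

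For the lower bound in part~(ii), however, your route genuinely diverges from the paper's, and your claim that it ``mirrors \cite{S18}'' is not accurate. Neither \cite{S18} nor the present paper uses a second-moment / Paley--Zygmund scheme. Instead the paper (Lemma~\ref{lem:liminf}) fixes $p\in(0,1)$, looks at the particles $u\in Z_{np}$ lying in a slowly growing ball $B(\kappa_n)$, and for each such $u$ defines an explicit event $E_n^u$ (the descendant trajectory jumps beyond $2\kappa_\delta(n+1)$ by time $n$ and then fluctuates by less than $\kappa_\delta(n)$ on $[n,n+1]$). These events are conditionally independent given $\calF_{np}$, and a direct stable-tail computation gives a uniform lower bound $\prob(E_n^u)\ge c\,\kappa_\delta(n+1)^{-\alpha}n$. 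The key inequality is then the \emph{exponential} Chebyshev bound
\[
\prob_x\Big(\textstyle\sum_{u}\indi_{E_n^u}\le e^{(-\lambda-\alpha\delta-\varepsilon)n}\Big)
\le e^{e^{(-\lambda-\alpha\delta-\varepsilon)n}}\,\Ex_x\Big[\textstyle\prod_u e^{-\indi_{E_n^u}}\Big]
\le \exp\!\big\{e^{(-\lambda-\alpha\delta-\varepsilon)n}-c\,\kappa_\delta(n+1)^{-\alpha}n\cdot N_{np}([0,\kappa_n])\big\},
\]
which is summable once one intersects with $\{N_{np}\ge e^{-\lambda p^2 n}\}$ and tunes $p,\eta_1,\eta_2$. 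This uses only first-moment / one-particle tail estimates.

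Your Paley--Zygmund scheme could in principle be made to work, since Assumption~\ref{ass1}(ii) ($R\mu\in\calK$) gives square integrability of $M_t$, but two points would need genuine work that you do not address: (a) the bound $\Ex_x\big[(N_t^{\kappa_\delta(t)})^2\big]\le C\big(\Ex_x[N_t^{\kappa_\delta(t)}]\big)^2$ is not stated in \cite{S22} in this form and would have to be derived from a Many-to-Two computation with the stable tail; and (b) in your bootstrapping step the Paley--Zygmund constant for each subtree depends on the starting point $\bfX_s^u$, and you would need uniformity over the (unbounded) set of particle locations at time $s$. The paper's device of restricting to $Z_{np}([0,\kappa_n])$ and controlling $N_{np}^{\kappa_n}$ separately is precisely what makes the required estimates uniform; an analogous restriction would be needed in your argument as well.
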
 

By (i), we can suppose that $L_t \le \kappa _{-\lambda / \alpha} (t)$, 
for large $t$.  
By (ii), we can also suppose that $L_t \ge \kappa _{\delta} (t)$, 
for any $\delta > -\lambda / \alpha$. 
By these, 
we will show \eqref{eq:main-in-intro} in Corollary \ref{cor}. 

\bigskip 

The remainder of this paper is constructed as follows: 
In Section 2, we summarize the basic notions and properties 
of the symmetric $\alpha$-stable processes and 
the branching symmetric $\alpha$-stable processes. 
In Section 3, we show Lemmas \ref{lem:3-7}--\ref{lem:liminf} 
for Theorem \ref{theo:main}. 
Similar to \cite{BH14, S18}, 
we use the Borel-Cantelli lemma. 
However, we can not use the spatial homogeneity of the distribution 
of the running maximum for the stable processes. 
Overcoming this, we use the estimate of the tail distribution of 
the running maximum. 
Then, we shall show Theorem \ref{theo:main} and Corollary \ref{cor} 
by the same argument as in \cite{BH14, S18}. 

Throughout this paper, 
the letters $c$ and $C$ (with subscript and superscript) 
denote finite positive constants which may vary from place to place. 
For positive functions $f(t)$ and $g(t)$ on $(0,\infty)$, 
we write $f(t) \lesssim g(t)$, $t \rightarrow \infty$ 
if positive constants $T$ and $c$ exist such that 
$f(t)\le c g(t)$ for all $t \geq T$. 
We write $f (t) \asymp g(t)$ if and only if 
both $f(t) \lesssim g(t)$ and $g(t) \lesssim f(t)$ hold. 
We also write $f(t)\sim  g(t)$, $t \rightarrow \infty$ 
if $f(t)/g(t)\rightarrow 1$ as $t \rightarrow \infty$. 
We will omit ``$t \to \infty$'' for short when the meaning is clear.

\section{Preliminaries}
Let $0 < \alpha <2$. 
In this section, 
we introduce the symmetric $\alpha$-stable process, 
the Kato class measure and the branching symmetric $\alpha$-stable process. 

\subsection{Symmetric $\alpha$-stable processes}

Let $(\{ X_t \}_{t \ge 0}, \{P_x \}_{x \in \rd}, \{\calF _t \}_{t \ge 0})$ 
be the symmetric $\alpha$-stable process $\rd$, 
that is, the Markov process generated by $-\frac12 (-\Delta)^{\alpha/2}$. 
Here, $\{\calF _t\}$ is the minimal augmented admissible filtration. 
We especially write $P$ when $x$ is the origin. 
The following result, for the transition function $p_t (x,y)$, 
was proved by Wada \cite{Wada} 
(see also \cite[Lemma 1]{S22}). 

\begin{lemma}[]\label{lem:Shio}
There exists a positive continuous function $g$ on [$0,\infty)$ 
such that
$$
	p_t(x, y) 
	=
	\dfrac{1}{t^{d/\alpha}}
	g \left( \dfrac{|x-y|}{t^{1/\alpha}} \right).
$$
Moreover, the function $g$ satisfies the following: 
	\begin{equation}
		\lim_{r \to \infty} r^{d+\alpha} g(r) 
		=
		\dfrac{\alpha 2^{\alpha-2} \sin (\frac{\alpha \pi}{2}) 
			\Gamma (\frac{d+\alpha}{2}) \Gamma (\frac{\alpha}{2})}
			{\pi ^{d/2+1}} .
	\label{eq:shio3}
	\end{equation} 
\end{lemma}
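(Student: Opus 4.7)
The plan is to derive both claims from the Fourier representation of $p_t$. Because the generator of $\{X_t\}$ is $-\frac{1}{2}(-\Delta)^{\alpha/2}$, the characteristic function satisfies $\Ex_x[e^{i\xi\cdot(X_t-x)}] = e^{-t|\xi|^\alpha/2}$, and Fourier inversion gives
\[
p_t(x,y) = \frac{1}{(2\pi)^d}\int_{\rd} e^{-i\xi\cdot(x-y)}\,e^{-t|\xi|^\alpha/2}\,d\xi.
\]
Substituting $\eta = t^{1/\alpha}\xi$ puts this in the self-similar form $p_t(x,y) = t^{-d/\alpha}\,g(|x-y|/t^{1/\alpha})$ with
\[
g(r) := \frac{1}{(2\pi)^d}\int_{\rd} e^{-i\eta\cdot z}\,e^{-|\eta|^\alpha/2}\,d\eta,\qquad |z|=r;
\]
rotational invariance of $e^{-|\eta|^\alpha/2}$ makes the right-hand side well-defined as a function of $r$ alone. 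Continuity on $[0,\infty)$ follows from dominated convergence (the integrand is bounded by the integrable $e^{-|\eta|^\alpha/2}$), and strict positivity is a standard fact about symmetric stable densities (the law has no Gaussian part and its Lévy measure has full support on $\rd$).

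For the tail asymptotic I would route the computation through the Lévy measure. The symbol admits the Lévy--Khintchine representation $\frac{1}{2}|\xi|^\alpha = \int_{\rd}(1-\cos(\xi\cdot x))\,\nu(dx)$ with $\nu(dx) = C(d,\alpha)\,|x|^{-d-\alpha}\,dx$; the constant is pinned down by passing to polar coordinates and invoking $\int_0^\infty (1-\cos u)\,u^{-1-\alpha}\,du = -\Gamma(-\alpha)\cos(\alpha\pi/2)$ together with the standard angular integral, yielding
\[
C(d,\alpha) = \frac{2^{\alpha-1}\,\Gamma(\tfrac{d+\alpha}{2})}{\pi^{d/2}\,|\Gamma(-\alpha/2)|}.
\]
The equivalence $g(r)\sim C(d,\alpha)\,r^{-d-\alpha}$ as $r\to\infty$ is then obtained by writing $e^{-|\eta|^\alpha/2} = 1 - \frac{1}{2}|\eta|^\alpha + R(\eta)$: the constant term contributes nothing for $r>0$; the $-\frac{1}{2}|\eta|^\alpha$ term has tempered-distribution Fourier transform equal (up to sign) to $C(d,\alpha)|z|^{-d-\alpha}$ and supplies the leading order; and the remainder $R(\eta) = O(|\eta|^{2\alpha})$ near $0$ (and exponentially decaying at infinity) gives contributions of order $o(r^{-d-\alpha})$ via integration by parts in the oscillatory integral.

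The main obstacle is matching the form of $C(d,\alpha)$ with the explicit right-hand side of \eqref{eq:shio3}. I would apply the reflection formula $\Gamma(-\alpha/2)\,\Gamma(1+\alpha/2) = -\pi/\sin(\alpha\pi/2)$ combined with $\Gamma(1+\alpha/2) = (\alpha/2)\,\Gamma(\alpha/2)$ to rewrite $|\Gamma(-\alpha/2)|^{-1} = \alpha\sin(\alpha\pi/2)\,\Gamma(\alpha/2)/(2\pi)$; substituting back converts $C(d,\alpha)$ into $\alpha\,2^{\alpha-2}\sin(\alpha\pi/2)\,\Gamma(\tfrac{d+\alpha}{2})\,\Gamma(\alpha/2)/\pi^{d/2+1}$, exactly the constant in \eqref{eq:shio3}. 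This reduction is purely algebraic but is where essentially all of the content of the constant lives; the principal difficulty is the bookkeeping, since any sign or factor error propagates directly into the final expression.
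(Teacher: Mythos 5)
The paper does not prove Lemma \ref{lem:Shio}; it is imported as a known result, attributed to Wada \cite{Wada} and also stated as Lemma 1 in \cite{S22}. Your proposal therefore supplies a self-contained derivation where the paper supplies none. The route you take --- Fourier inversion for the self-similar form of $p_t$, then identification of the tail of $g$ with the density of the L\'evy measure --- is the classical Blumenthal--Getoor argument, and your algebra is correct: starting from $C(d,\alpha)=2^{\alpha-1}\Gamma(\tfrac{d+\alpha}{2})/(\pi^{d/2}|\Gamma(-\alpha/2)|)$, the reflection formula and $\Gamma(1+\tfrac{\alpha}{2})=\tfrac{\alpha}{2}\Gamma(\tfrac{\alpha}{2})$ do indeed give $\alpha\,2^{\alpha-2}\sin(\tfrac{\alpha\pi}{2})\Gamma(\tfrac{d+\alpha}{2})\Gamma(\tfrac{\alpha}{2})/\pi^{d/2+1}$, which also matches the paper's Dirichlet form constant via $\tfrac12\calA(d,\alpha)=C(d,\alpha)$.

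There is one genuine error in the asymptotic step. You claim the remainder $R(\eta)=e^{-|\eta|^\alpha/2}-1+\tfrac12|\eta|^\alpha$ is ``exponentially decaying at infinity''; it is not. Since $e^{-|\eta|^\alpha/2}\to 0$ while $\tfrac12|\eta|^\alpha\to\infty$, one has $R(\eta)\to+\infty$ at a polynomial rate, so $R$ is not integrable and the decomposition $e^{-|\eta|^\alpha/2}=1-\tfrac12|\eta|^\alpha+R(\eta)$ as written cannot be Fourier-transformed term by term. The fix is a smooth cutoff $\chi$ equal to $1$ near the origin: $(1-\chi)e^{-|\cdot|^\alpha/2}$ is Schwartz and its inverse Fourier transform decays faster than any polynomial; $\chi$ itself is Schwartz; $-\tfrac12|\eta|^\alpha\chi(\eta)$ has inverse Fourier transform equal to $C(d,\alpha)|z|^{-d-\alpha}$ up to a Schwartz correction, since $|\eta|^\alpha$ is smooth away from $0$ and the cutoff only perturbs the transform by something rapidly decaying; and $R(\eta)\chi(\eta)$, now compactly supported and $O(|\eta|^{2\alpha})$ at the origin, gives a contribution $O(r^{-d-2\alpha})=o(r^{-d-\alpha})$. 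With that modification the argument is correct; the parenthetical assertion about exponential decay of $R$ should simply be removed.
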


The following is well known as the scaling property 
of the symmetric $\alpha$-stable process. 
We note by Lemma \ref{lem:Shio} that, 
for any $\kappa \ge 0$, $t >0$ and $x \in \rd$,  
\begin{equation}
	\begin{aligned}
	& 
	P_x ( |X _t| \ge \kappa) 
	= 
	\int _{|y| \ge \kappa} 
	t^{-d / \alpha} g \left( \frac{|x-y|}{t^{1/\alpha}} \right) 
	\, d y 
	\\
	= &
	\int _{|t^{1/\alpha}z +x| \ge \kappa }
	g (|z|) 
	\, d z 
	= 
	P \left( \left| t^{1/\alpha} X_1 + x \right| \ge \kappa \right) .
	\end{aligned}
	\label{eq:20230206-2}
\end{equation}
By this and 
$ \left\{ y \in \rd : |y| \ge \kappa \right\}
\subset \left\{ y \in \rd  : |y-x| \ge \kappa - |x| \right\} $, 
\begin{equation*}
	\begin{aligned}
	P_x (|X_t| \ge \kappa )
	&= 
	\int _{|y| \ge \kappa} 
	t^{-d / \alpha} g \left( \frac{|x-y|}{t^{1/\alpha}} \right) 
	\, d y 
	\le 
	\int _{|y-x| \ge \kappa -|x|} 
	t^{-d/\alpha} g \left( \frac{|x-y|}{t^{1/\alpha}} \right)
	\, d y
	\\
	&= 
	\int _{t^{1/\alpha} |z| \ge \kappa -|x|} 
	g \left( |z| \right)
	\, d z	
	=
	\omega _d
	\int _{(\kappa -|x|) t^{-1/\alpha}} ^{\infty}
	g \left( r \right) r^{d-1}
	\, d r ,
	\end{aligned} 
	\label{eq:20230206-1}
\end{equation*}
where $\omega _d = 2 \pi ^{d/2} \Gamma (d/2) ^{-1}$ 
is the surface area of the unit ball in $\R ^d$. 
Similarly, by $
	\left\{  
		z \in \rd : t^{1/\alpha} \left|z \right| - |x| \ge \kappa  
	\right\}
	\subset 
	\left\{ 
		z \in \rd : \left| t^{1/\alpha} z + x \right| \ge \kappa
	\right\}$ 
and \eqref{eq:20230206-2}, 
\begin{equation*}
	P_x (|X_t| \ge \kappa )
	=
	\int _{|t^{1/\alpha}z +x| \ge \kappa }
	g (|z|) 
	\, d z 
	\ge 
	\int _{|t^{1/\alpha}z| - |x| \ge \kappa} 
	g(|z|)
	\, d z  
	= 
	\omega _d
	\int _{(\kappa + |x| ) t^{-1/\alpha}} ^{\infty}  
	g(r) r^{d-1} 
	\, d r . 
	\label{eq:20230206-3}
\end{equation*} 
Thus,  
\begin{equation} 
	\omega _d
	\int _{(\kappa + |x| ) t^{-1/\alpha}} ^{\infty}  
	g(r) r^{d-1} 
	\, d r  
	\le 
	P_x (|X_t| \ge \kappa )
	\le 
	\omega _d
	\int _{(\kappa - |x| ) t^{-1/\alpha}} ^{\infty}  
	g(r) r^{d-1} 
	\, d r  .
	\label{eq:distrXt}
\end{equation}

Let us set 
\[
	\calM _t 
	= 
	\sup _{0 \le s \le t} \left| X_s \right| . 
\]
The tail probability of $\calM_t$ is determined by the one of $|X_t|$. 
By the fundamental argument (e.g., \cite[Section 2.8.A]{KS} for the Brownian case), 
we have the following lemma (see Section \ref{appendix1}). 
More general cases appeared in \cite{KR}. 
\begin{lemma}\label{lem:runningmax}
Let  $\kappa >0$. 
For any $t > 0$ and $x \in \rd$ with $|x|< \kappa$, 
\begin{equation}
	P _x ( |X_t| \ge \kappa ) 
	\le 
	P _x ( \calM _t \ge \kappa ) 
	\le 
	2
	P _x ( |X_t| \ge \kappa ) .
	\label{eq:lem}
\end{equation} 
\end{lemma}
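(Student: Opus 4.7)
The plan is to follow the classical reflection-principle argument, as indicated by the reference to \cite[Section 2.8.A]{KS}. The lower bound $P_x(|X_t| \ge \kappa) \le P_x(\calM_t \ge \kappa)$ is immediate from $|X_t| \le \calM_t$, so the substance lies in the upper bound. I would introduce the hitting time $\tau_\kappa := \inf\{s \ge 0 : |X_s| \ge \kappa\}$. Because $\{X_t\}$ has c\`adl\`ag paths, $\{\calM_t \ge \kappa\} = \{\tau_\kappa \le t\}$ and $\tau_\kappa$ is an $\{\calF_s\}$-stopping time; the assumption $|x| < \kappa$ guarantees $\tau_\kappa > 0$ $P_x$-a.s. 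Applying the strong Markov property at $\tau_\kappa$ then gives
\begin{equation*}
P_x(|X_t| \ge \kappa) \ge P_x\!\left(\tau_\kappa \le t,\, |X_t| \ge \kappa\right) = E_x\!\left[\indi_{\{\tau_\kappa \le t\}}\, P_{X_{\tau_\kappa}}\!\left(|X_{t - \tau_\kappa}| \ge \kappa\right)\right].
\end{equation*}
Hence the upper bound will follow once I establish the pointwise estimate
\begin{equation*}
P_y\!\left(|X_s| \ge \kappa\right) \ge \tfrac{1}{2} \qquad \text{for every } s \ge 0 \text{ and every } y \in \rd \text{ with } |y| \ge \kappa,
\end{equation*}
since plugging this into the inequality above yields $P_x(|X_t| \ge \kappa) \ge \tfrac{1}{2}P_x(\calM_t \ge \kappa)$, which is equivalent to the claimed bound.

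To prove the pointwise estimate I would exploit the symmetry built into the process. Since $\{X_t\}$ is a L\'evy process, the law of $X_s$ under $P_y$ coincides with that of $y + Y_s$ where $Y_s \sim X_s$ under $P_0$; and because the process is symmetric $\alpha$-stable, $-Y_s \stackrel{d}{=} Y_s$, so the one-dimensional projection $v \cdot Y_s$ along the unit vector $v := y/|y|$ is symmetric about $0$ and satisfies $P(v \cdot Y_s \ge 0) \ge \tfrac{1}{2}$. On that event, the Pythagorean identity
\begin{equation*}
|y + Y_s|^2 = \bigl(|y| + v \cdot Y_s\bigr)^{2} + \bigl|Y_s - (v \cdot Y_s)\, v\bigr|^{2} \ge |y|^2 \ge \kappa^2
\end{equation*}
yields $|X_s| \ge \kappa$, completing the estimate.

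The only point requiring care is that, because the stable process has jumps, $|X_{\tau_\kappa}|$ may strictly overshoot $\kappa$ rather than equal it, so one cannot invoke the exact $\tfrac{1}{2}$-symmetry used in the Brownian argument at a single level $\kappa$. The calculation above is designed to dodge this: the bound $P_y(|X_s| \ge \kappa) \ge \tfrac{1}{2}$ is uniform over all $y$ with $|y| \ge \kappa$ and in particular is insensitive to overshoot. Apart from this subtlety, the proof is routine, relying only on the strong Markov property, spatial homogeneity, and symmetry of the $\alpha$-stable process.
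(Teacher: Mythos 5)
Your proof is correct and follows essentially the same reflection-principle route as the paper's: strong Markov at the first exit time of $B(\kappa)$, combined with a symmetry argument showing that a particle already at distance $\ge\kappa$ from the origin remains at distance $\ge\kappa$ at any later time with probability at least $1/2$. The paper proves this key estimate by reflecting the transition density through the point $w$ (via the inclusion $\{z:|2w-z|\le\kappa\}\subset\{z:|z|\ge\kappa\}$ and the fact that $p_t(w,\cdot)$ depends only on $|w-\cdot|$), whereas you project onto $y/|y|$ and invoke Pythagoras together with the symmetry of the one-dimensional marginal; these are two phrasings of the same rotational symmetry, and both correctly account for the overshoot that makes the exact Brownian reflection at level $\kappa$ unavailable.
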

Combining Lemma \ref{lem:Shio} with Lemma \ref{lem:runningmax}, 
we have the tail estimate of the running maximum 
for the stable processes. 
\begin{lemma}\label{lem:tailmax}
Functions $\kappa _i (s) $, $i=1,2$ on $[0,\infty)$ 
satisfy that $\kappa_1 (s) < \kappa _2 (s)$, for all $s \ge 0$, 
$\kappa _i (s) \to \infty$ and 
$\kappa _2 (s) - \kappa _1 (s) \to \infty$ as $s \to \infty$. 
Then, there exist positive constants $c_1 , c_2$ and $T$ such that, 
for any $s \ge T$ and $|x| \le \kappa _1 (s) $,  
\begin{equation}
	c_1 (\kappa _2 (s) + \kappa _1 (s)) ^{-\alpha} 
	\le 
	P_x (\calM_ 1 \ge \kappa_2 (s))
	\le  
	c_2 (\kappa _2 (s) - \kappa _1 (s) ) ^{-\alpha}.
	\label{eq:runngingmax}
\end{equation}
\end{lemma}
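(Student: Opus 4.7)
The plan is to combine Lemma \ref{lem:runningmax}, which pins $P_x(\calM_1 \ge \kappa_2(s))$ between $P_x(|X_1| \ge \kappa_2(s))$ and $2P_x(|X_1| \ge \kappa_2(s))$, with the two-sided tail bound \eqref{eq:distrXt} for $|X_1|$, and then to use the tail asymptotic \eqref{eq:shio3} of $g$ to evaluate the integrals on the appropriate scale. The hypothesis $|x| \le \kappa_1(s) < \kappa_2(s)$ makes Lemma \ref{lem:runningmax} applicable, so throughout the argument it is enough to produce matching upper and lower bounds for $P_x(|X_1| \ge \kappa_2(s))$ of the stated form.

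First I would set $t=1$ and $\kappa = \kappa_2(s)$ in \eqref{eq:distrXt}. This yields
\[
\omega_d \int_{\kappa_2(s)+|x|}^{\infty} g(r) r^{d-1} \, dr
\le P_x(|X_1| \ge \kappa_2(s))
\le \omega_d \int_{\kappa_2(s)-|x|}^{\infty} g(r) r^{d-1} \, dr .
\]
Using $|x| \le \kappa_1(s)$, the lower integral is minorized by the integral starting at $\kappa_2(s)+\kappa_1(s)$, while the upper integral is majorized by the one starting at $\kappa_2(s)-\kappa_1(s)$. Both starting points tend to $\infty$ by assumption: for the former this uses $\kappa_i(s) \to \infty$, and for the latter $\kappa_2(s)-\kappa_1(s) \to \infty$.

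Next I would invoke \eqref{eq:shio3}. Choose $r_0>0$ and constants $0<c'<c''$ so that $c' r^{-d-\alpha} \le g(r) \le c'' r^{-d-\alpha}$ for all $r \ge r_0$; this is allowed because $g$ is continuous and $r^{d+\alpha}g(r)$ has a positive finite limit. Pick $T$ large enough so that for $s \ge T$ we have $\kappa_2(s)-\kappa_1(s) \ge r_0$ (hence automatically $\kappa_2(s)+\kappa_1(s) \ge r_0$). Then elementary integration gives
\[
\int_{R}^{\infty} g(r) r^{d-1}\, dr \asymp R^{-\alpha}
\quad (R \ge r_0),
\]
with implicit constants depending only on $c', c'', \alpha, d$. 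Plugging in $R=\kappa_2(s)+\kappa_1(s)$ for the lower bound and $R=\kappa_2(s)-\kappa_1(s)$ for the upper bound yields
\[
c_1'(\kappa_2(s)+\kappa_1(s))^{-\alpha} \le P_x(|X_1| \ge \kappa_2(s)) \le c_2'(\kappa_2(s)-\kappa_1(s))^{-\alpha} .
\]
Finally, Lemma \ref{lem:runningmax} turns this into the same bounds for $P_x(\calM_1 \ge \kappa_2(s))$ after absorbing the factor $2$ into $c_2'$, which is \eqref{eq:runngingmax}.

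The only genuine subtlety is making sure the asymptotic regime of $g$ applies uniformly in $x$ with $|x| \le \kappa_1(s)$; this is exactly why the hypothesis $\kappa_2(s)-\kappa_1(s) \to \infty$ is needed, since it forces the lower limit $\kappa_2(s)-|x|$ in the upper bound to stay past the threshold $r_0$ for all admissible $x$ once $s \ge T$. Once that is handled, the proof is essentially a calculation with power-law tails and presents no further difficulty.
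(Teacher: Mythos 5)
Your proof is correct and follows essentially the same route as the paper: reduce to tail estimates for $P_x(|X_1|\ge\kappa_2(s))$ via Lemma \ref{lem:runningmax}, sandwich that probability with the integrals in \eqref{eq:distrXt}, and then use the power-law asymptotic \eqref{eq:shio3} of $g$ beyond a threshold $r_0$, choosing $T$ so that $\kappa_2(s)-\kappa_1(s)\ge r_0$ for $s\ge T$. The paper writes out only the upper estimate and states that the lower one is similar, whereas you spell out both sides, but the argument is the same.
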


\begin{proof}
Let $t=1$ on \eqref{eq:lem}.  
It suffices to give upper and lower estimates for 
$P_x (|X_1| \ge \kappa _2 (s))$. 
We see from \eqref{eq:distrXt} that, 
for any $s \ge 0$ and $|x| \le \kappa _1 (s)$, 
\[
	P_x (|X_1| \ge \kappa _2 (s))
	\le 
	\omega _d
	\int _{\kappa _2 (s) - |x|} ^{\infty}  
	g(r) r^{d-1} 
	\, d r 
	\le 
	\omega _d
	\int _{\kappa _2 (s) - \kappa _1 (s)} ^{\infty}  
	g(r) r^{d-1} 
	\, d r . 
\] 
By \eqref{eq:shio3}, there exist positive constants $C, R$ 
such that $g(r) \le C r^{-d-\alpha}$, for $r \ge R$. 
We can take $T >0$ such that 
$\kappa _2 (s) - \kappa _1 (s) \ge R$, for ant $s > T$. 
Thus, for any $s > T$, 
$$
	P_x (|X_1| \ge \kappa _2 (s))
	\le 
	c
	\int _{\kappa _2 (s) - \kappa _1 (s)} ^{\infty}  
	r^{-\alpha -1} 
	\, d r 
	= 
	c'
	\left( 
		\kappa _2 (s) - \kappa _1 (s)
	\right) ^{-\alpha} .
$$ 
Similarly, 
we have the lower estimate of $P_x (|X_1| \ge \kappa _2 (s)) \ge 
c'' ( \kappa _2 (s) + \kappa _1 (s) ) ^{-\alpha}$. 
\end{proof}

\subsection{Kato class measures}\label{sec:Katoclass}

We assume that the branching rate measure is in Kato class 
and has a compact support. 
For the convenience of the reader, 
we repeat the relevant materials from \cite{S22} without the proofs.  

For $\beta > 0$, 
the $\beta$-resolvent density $G_\beta (x,y)$ of 
the symmetric $\alpha$-stable process is given by 
\[
	G_\beta (x,y) 
	= 
	\int _0 ^{\infty}
		e^{-\beta t} 
		p_t (x,y) d t, 
	\quad x,y \in \rd , \quad t > 0 .
\]
\begin{defn}\label{def:Kato}
\begin{itemize}
\item[{\rm (i)}] 
A positive Radon measure $\nu$ on ${\mathbb R}^d$  
is in the Kato class ($\nu\in {\cal K}$ in notation) if 
\[
	\lim_{\beta \to \infty}
	\sup_{x \in \rd}
	\int_{\rd} G_\beta (x,y) 
	\, \nu(d y)
	=0.
\] 
\item[{\rm (ii)}] 
A measure $\nu \in {\cal K}$ is $1$-Green tight 
($\nu \in \calK _\infty (1)$ in notation) if
\[
	\lim_{R \to \infty} 
	\sup_{x \in \rd} 
	\int_{{|y| \ge R}} G_1(x,y) 
	\, \nu(d y)=0.
\] 
\end{itemize}
\end{defn}
Clearly, if Kato class measure $\nu$ has a compact support, 
then $\nu$ is $1$-Green tight. 

Let $(\calE , \calF)$ the regular Dirichlet form generated 
by the symmetric $\alpha$-stable process:
\begin{eqnarray*}
	\calE (u,v)
	&:= &
	\frac{1}{2} \calA (d,\alpha)
	\iint _{\rd \times \rd \setminus \text{diag}} 
		\dfrac{(u(x) - u(y))(v(x) - v(y))}{|x-y|^{d+\alpha}} 
	\, d x d y ,
	\\
	\calF 
	&:=& 
	\left\{
		u \in L^2 (\rd) \ \left| \ \calE (u,u) < \infty \right. 
	\right\} , 
\end{eqnarray*} 
where `diag' $= \{ (x,x) \mid x \in \rd \}$ and 
\[
	\calA (d,\alpha) 
	= 
	\dfrac{\alpha 2^{\alpha-1} \Gamma (\frac{\alpha+d}{2})}
		{\pi ^{d/2} \Gamma (\frac{2-\alpha}2) } 
\]
(see \cite[Examples 1.4.1 and 1.2.1]{FOT11}). 
By the regularity, $u \in \calF$ admits a quasi continuous version $\wt{u}$. 
We always write $u$, instead of $\wt{u}$. 

For $\nu := \nu^+ - \nu^- \in \calK - \calK$, 
we introduce a symmetric bilinear form $\calE ^\nu$ by 
\[
	\calE ^\nu (u,u)
	= 
	\calE (u,u) - \int _{\rd} u^2 d 
	\, \nu, 
	\quad u \in \calF .
\]
Since $\nu$ charges no set of zero capacity (\cite[Therem 3.3]{ABM91}), 
$\calE ^\nu (u,u)$ is determined uniquely by $u$, 
that is, $\calE ^\nu (u,u)$ is unaffected by 
the choices of the quasi continuous versions. 
According to \cite[Theorem 4.1]{ABM91}, 
$(\calE ^\nu, \calF)$ is a lower semi-bounded symmetric closed form. 
Then, we write $\calH ^\nu$ the self-adjoint operator on $L^2 (\rd)$ 
such that $\calE ^\nu (u,v) = ( \calH ^\nu u ,v)$ and 
we write $p_t ^\nu$ for the $L^2$-semigroup generated by $\calH ^\nu$. 
That is, $\calH^\nu = \frac12 (- \Delta)^{\alpha /2} - \nu $. 
By \cite[Theorem 7.1]{ABM91}, 
$p_t ^\nu$ admits a symmetric integral kernel $p_t ^\nu (x,y)$ 
which is jointly continuous on $(0,\infty) \times \rd \times \rd$. 

Let $A_t ^{\nu}$ be PCAF 
which is in the Revuz correspondence with $\nu \in \calK$
(see \cite[pages 230 and 401]{FOT11}). 
We set $A_t ^\nu = A_t ^{\nu^+} - A_t ^{\nu^-}$, 
for $\nu = \nu^+ - \nu^- \in \calK - \calK$. 
By the Feynman-Kac formula, 
\[
	p_t ^\nu f (x) 
	= 
	E_x \left[ e ^{A_t ^\nu} f(X_t) \right], \quad 
	f \in L^2 (\rd) \cap \calB_b (\rd) .
\]

In \cite{S22}, Shiozawa proved the invariance of the essential 
spectrum of $\frac{1}{2}(-\Delta) ^{\alpha /2}$ 
under the  perturbation with respect to the finite Kato class measure
(see also \cite[Remark 8]{S22}). 
Let $\sigma _{\text{ess}} (\calH ^\nu)$ 
be the essential spectrum of $\calH ^\nu$. 

\begin{proposition}[Proposition 4 in \cite{S22}] \label{prop4}
If $\nu ^+$ and $\nu^-$ are finite Kato class measures, 
then $\sigma _{ \text{ess} } ( \calH ^\nu ) 
= \sigma _{ \text{ess} } ( (-\Delta)^{\alpha/2}/2 ) = [0,\infty)$. 
\end{proposition}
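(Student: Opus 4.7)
My plan is to invoke Weyl's theorem on the stability of essential spectra under compact perturbations of resolvents: if $\calH^0 := \frac12 (-\Delta)^{\alpha/2}$ has essential spectrum $[0,\infty)$ and the resolvent difference $(\calH^\nu + \beta)^{-1} - (\calH^0+\beta)^{-1}$ is compact on $L^2(\rd)$ for some $\beta$ lying in both resolvent sets, then $\sigma_{\text{ess}}(\calH^\nu) = \sigma_{\text{ess}}(\calH^0) = [0,\infty)$. The first ingredient is standard: conjugation by the Fourier transform turns $\calH^0$ into multiplication by the real symbol $|\xi|^\alpha/2$, whose (purely absolutely continuous) spectrum equals $[0,\infty)$, so in particular $\sigma_{\text{ess}}(\calH^0) = [0,\infty)$.

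For the compactness of the resolvent difference I would proceed as follows. By the lower semi-boundedness of $(\calE^\nu,\calF)$ recorded just after the definition of $\calH^\nu$, there exists $\beta$ large enough that both $\calH^0 + \beta$ and $\calH^\nu + \beta$ are strictly positive and the resolvent identity reads
\begin{equation*}
(\calH^0+\beta)^{-1} - (\calH^\nu+\beta)^{-1} = (\calH^\nu+\beta)^{-1} \, T_\nu \, (\calH^0+\beta)^{-1},
\end{equation*}
where $T_\nu : \calF \to \calF^*$ is the form-perturbation determined by $\langle T_\nu u,v\rangle = \int uv \, d\nu$. Since $(\calH^\nu+\beta)^{-1}$ is bounded on $L^2(\rd)$, it is enough to show that the map $(\calH^0+\beta)^{-1} : L^2(\rd) \to L^2(\rd, d|\nu|)$ is compact. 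I would first handle the case of compactly supported Kato $\nu^\pm$, where the Kato condition controls the local singularity of the resolvent kernel $G_\beta(x,y)$ against $\nu$ and the compact support supplies tightness, so that a Rellich-type argument (truncating $G_\beta$ by smooth cutoffs and applying Arzel\`a--Ascoli) yields compactness. For general finite Kato $\nu^\pm$ I would approximate by $\nu^\pm_R := \nu^\pm|_{B(R)}$ and pass to the limit.

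The main obstacle is verifying, in this passage to the limit, the tail estimate
\begin{equation*}
\lim_{R\to\infty}\sup_{x\in\rd}\int_{|y|\ge R} G_\beta(x,y)\, \nu^\pm(dy) = 0,
\end{equation*}
i.e.\ the $1$-Green tightness of $\nu^\pm$. The short-distance contribution of $G_\beta$ is handled by applying the Kato condition to the restricted measure $\nu^\pm \cdot \indi_{B(R)^c}$, while the long-distance contribution is handled by combining the power decay $g(r)\asymp r^{-d-\alpha}$ from \eqref{eq:shio3} with $\nu^\pm(B(R)^c)\to 0$, which is where the hypothesis that $\nu^\pm$ be \emph{finite} (rather than merely Kato) enters decisively. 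Once this tail estimate is established, the perturbation maps for $\nu^\pm_R$ converge in operator norm to the one for $\nu^\pm$; a norm limit of compact operators is compact, so Weyl's theorem gives $\sigma_{\text{ess}}(\calH^\nu) = [0,\infty)$.
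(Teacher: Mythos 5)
The paper does not actually prove this proposition; it is quoted verbatim from \cite{S22} (Proposition 4 there), so there is no in-paper proof to compare against. Your proposal—Weyl's theorem for compact resolvent differences, reduction to compactness of $(\calH^0+\beta)^{-1}\colon L^2(\rd)\to L^2(d|\nu|)$, handling the compactly supported case first and then approximating by $\nu|_{B(R)}$—is the standard route in the Dirichlet form literature (Takeda, Shiozawa) and, as far as one can tell, the route of \cite{S22} itself. Crucially, you correctly isolate the point where \emph{finiteness} (and not merely the Kato property) is used: finite Kato $\Rightarrow$ $1$-Green tight, via the split $\int_{|y|\ge R}G_1(x,y)\,\nu(dy)\le\sup_x\int_{|x-y|<\delta}G_1\,d\nu + C_\delta\,\nu(B(R)^c)$ (Kato controls the first term uniformly once $\delta$ is small, finiteness kills the second as $R\to\infty$). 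That Lebesgue measure is Kato but not Green-tight—and does shift the essential spectrum—confirms this is the decisive hypothesis. Two small remarks: the sign in your second resolvent identity is reversed (harmless for compactness), and for the compactly supported step a Hilbert--Schmidt estimate on the kernel $G_\beta\indi_{\{|x-y|>\epsilon\}}$ is cleaner than a Rellich/Arzel\`a--Ascoli argument since the target is $L^2(d|\nu|)$ with $\nu$ a general measure; the Kato condition then controls the $\{|x-y|<\epsilon\}$ remainder in operator norm, and a norm limit of compacts is compact.
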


We denote by $\lambda (\nu)$ 
the bottom of the $L^2$-spectrum of $\calH ^\nu$: 
\[
	\lambda (\nu) 
	= 
	\inf \left\{ \calE ^\nu (u,u) 
	\ \left| \  
	u \in C_0 ^\infty (\rd), \ 
	\int _{\rd} u^2 
	\, d x = 1
	\right.  \right\} .
\]
Here, $C_0 ^\infty (\rd)$ is the set of all 
infinitely differentiable functions with compact support on $\rd$. 
Proposition \ref{prop4} implies that 
if $\lambda (\nu) < 0$, 
then $\lambda (\nu)$ is the eigenvalue. 
We see from \cite[Theorem 2.8 and Section 4]{T08} that 
the corresponding eigenfunction has a bounded and 
strictly positive continuous version. 
Let us denote by $h$ the $L^2$-normalized version. 

\begin{remark}
Although we introduce the invariance property for general signed measures, 
we use only positive measures. 
In this paper, the branching rate measure $\mu$ and  
$\nu := (Q-1) \mu$ are always positive. 
\end{remark}

In this paper, 
we always assume that the spectral bottom $\lambda ((Q-1) \mu)$ of 
\eqref{eq:Sch} is strictly negative. 
In \cite[Examples 19 and 20]{S22} (see also \cite[Example 4.7]{S08}), 
Shiozawa gave the examples such that $\lambda ((Q-1) \mu ) < 0$. 
To apply his examples, we assume that $p_2 \equiv 1$, 
thus $Q \equiv2$. 
\begin{ex}[Examples 19 and 20 in \cite{S22}]
For $d=1$, $\alpha \in (1,2)$. 
If the branching rate measure $\mu = c \delta _0$ $(c>0)$, then 
\[
 	\lambda ((Q-1) \mu) 
	=
	- 
	\left\{
		\dfrac{c 2^{1/\alpha}}
		{\alpha \sin (\frac{\pi}{\alpha})}
	\right\} ^{\alpha /(\alpha -1)} .
\]
For $d > \alpha$, $1 < \alpha < 2$. 
If $\mu$ is the surface measure $c \delta _r$ ($c, r>0$) on $\{ y \in \rd : \| y \| = r \}$, then 
\[
	\lambda ((Q-1) \mu ) < 0
	\iff 
	r
	> 
	\left\{ 
		\dfrac{\sqrt{\pi} \Gamma (\frac{d+\alpha-2}{2})  \Gamma (\frac{\alpha}{2})}
			{c \Gamma (\frac{d-\alpha}{2}) \Gamma (\frac{\alpha-1}{2})} 
	\right\} ^{1 / (\alpha -1)} .
\]
\end{ex}

Let $\nu$ be a Kato class measure with a compact support. 
We assume that $\lambda (\nu) < 0$. 
We introduce two asymptotic behaviors of the Feynman-Kac functionals.  
By \cite[Theorem 5.2]{T08}, 
\begin{equation}
	E_x \left[ e^{A^\nu _t} \right] 
	\asymp 
	e^{-\lambda (\nu) t}, \quad 
	t \to \infty ,
	\label{eq:3-11}	
\end{equation}
for any $x \in \rd$ 
(for a more precise evaluation, see \cite[Remark 11]{S22}). 
The following asymptotic behavior is given by [Lemma 12 in \cite{S22}]
(see also \cite[(3.38) and (3.39)]{ShioandN}, for the Brownian cases). 
Let $\kappa (t) : [0, \infty ) \to [0, \infty)$. 
We assume that $\kappa (t) t^{-1/\alpha} \to \infty$, 
as $t \to \infty$. 
Then, for each $x \in \rd$,
\begin{equation}
	E_x \left[ e^{A_t ^{\nu}} \; ; \; |X_t| \ge \kappa (t) \right]
	\asymp
	\kappa (t) ^{- \alpha} e^{-\lambda (\nu) t} , 
	\qquad t \to \infty. 
	\label{eq:3-1}
\end{equation}
In the proof of the main theorem, 
we extensively use \eqref{eq:3-11} and \eqref{eq:3-1}.

\subsection{Branching $\alpha$-stable processes}
In this subsection, 
we recall the branching symmetric $\alpha$-stable process
(see \cite{INW-I, INW-II, INW-III}, \cite{S08} and references therein for details) and 
we introduce some properties.

Let $\mu \in \calK$ be a branching rate measure and 
$\{ \{ p_n (x) \}_ {n \ge 1} \mid x \in \rd\}$ 
be a branching mechanism such that  
\[
	0 \le p_n (x) \le 1 , \ n \ge 1 
	\ \text{ and } \ 
	\sum _{n =1} ^{\infty} p_n (x) = 1, \ x \in \rd .
\]
A random time $T$ has an exponential distribution
\[
	\prob _x \left( T > t \mid \calF _{\infty} \right) 
	= e^{-A_t ^{\mu}}, \quad t > 0. 
\]
An $\alpha$-stable particle starts at $x \in \rd$. 
After an exponential random time $T$, 
the particle splits into $n$-particles with probability $p_n (X_{T-})$. 
New ones are independent $\alpha$-stable particles starting at $X_{T-}$ and 
each one independently splits into multiple particles, 
the same as the first one. 
The $n$-particles are represented by 
a point in the following configuration space $\bfS$. 
Let $(\rd)^{(0)} = \{ \Delta \}$ and $(\rd)^{(1)} = \rd$. 
For $n \ge 2$, we define the equivalent relation $\sim$ on 
$(\rd)^n = \underbrace{\rd \times \cdots \times \rd}_{n}$ as follows: 
for $\mathbf{x} ^n = (x^1,\dots , x^n)$ and 
$\mathbf{y} ^n = (y^1,\dots , y^n) \in (\rd ) ^n$, 
we write $\mathbf{x} \sim \mathbf{y}$ 
if there exists a permutation $\sigma $ on $\{1,2,\dots , n\}$ such that 
$y^i = x^{\sigma (i)}$ for any $i \in \{1,2\dots ,n \}$. 
If we define $(\rd)^{(n)}=(\rd)^n \slash \sim $ for 
$n \ge 2$ and $\bfS = \bigcup _{n=0}^{\infty} (\rd)^{(n)}$, 
then $n$-points in $\rd$ determine a point in $(\rd)^{(n)}$. 
The symmetric $\alpha$-stable process 
$(\{ \bfX _t \}_{t \ge 0}, \{ \prob_{\mathbf{x}} \}_{\mathbf{x} \in \bfS}, 
\{ \calG _t \}_{t \ge 0})$ is an $\bfS$-valued Markov process. 
Abusing notation, 
we regard $x \in \rd$ in the same way as $\mathbf{x} \in (\rd)^{(1)}$ and 
write $\prob _x$ for $x \in \rd$. 
That is, $(\{ \bfX _t \}_{t \ge 0} , \prob _x , \{ \calG _t \}_{t \ge 0})$ is 
the branching symmetric $\alpha$-stable process 
such that a single particle starts from $x \in \rd$. 

Let $Z_t$ be the set of all particles and 
$\bfX_t ^u$ the position of $u \in Z_t$ at time $t$. 
For $f \in \calB _b (\rd )$, 
\[
	Z_t (f) := 
	\ds\sum _{u \in Z_t} f \left( \bfX _t ^u \right), 
	\quad t \ge 0 .
\]
In particular, for $\kappa > 0$, 
\[
	Z_t ^\kappa 
	:= 
	\{ u \in Z_t \; : \; |\bfX _t^u| \ge \kappa \} , 
	\qquad 
	N_t ^\kappa 
	:= 
	\ds\sum _{u \in Z_t} 
	\indi _{[\kappa , \infty)} \left( \left| \bfX _t ^u \right| \right), 
	\qquad t \ge 0 . 
\]
The random variable $N_t^\kappa$ is the number of particles 
which stay outside $B(R)$, 
where $B(R)$ is a sphere of radius $R$ centered at the origin.

Let us set 
\[
	Q(x) = \sum _{n=1} ^{\infty} n p_n (x), \qquad 
	R(x) = \sum _{n = 2} ^{\infty} n (n-1) p_n (x) .
\]
We write $R \mu $ in the meaning of $R(x) \mu (d x)$, 
and we write $(Q-1) \mu$ in the same meaning.  
\begin{assumption}\label{ass1}
For the branching rate measure $\mu \in \calK$, 
we assume the following: 

\begin{enumerate}
\item[{\rm (i)}] The support of $\mu$ is compact.

\item[{\rm (ii)}] $R(x) \mu (d x) \in \calK$. 

\item[{\rm (iii)}] $\lambda ((Q-1) \mu) < 0$. 

\end{enumerate}
\end{assumption}
By Assumption \ref{ass1} (iii) and Proposition \ref{prop4}, 
$\lambda ( (Q-1) \mu)$ is the eigenvalue. 
The corresponding eigenfunction $h$ is bounded and 
$L^2$-normalized strictly positive continuous function. 
For $h$, we define a martingale:  
\begin{equation}
	M_t = e^{\lambda ((Q-1)\mu)t} Z_t (h), \quad t \ge 0 .
	\label{mart} 
\end{equation} 
By \cite[Lemma 3.4]{S08}, $M_t$ is square integrable. 
Thus, $M_\infty := \lim _{t \to \infty} M_t$ exists in $[0, \infty)$, $\prob_x$-a.s 
and	$h(x) = \Ex _x \left[ M_\infty \right]$ (e.g. \cite[Theorem 4.4.6]{Durrett}). 
Hence, $M_\infty < \infty$, $\prob _x$-a.s. 
and $\prob _x (M_\infty > 0 ) > 0$. 

We introduce Many-to-One and historical Many-to-One lemmas. 
\begin{theorem}[Lemma 3.3 in \cite{S08}]\label{manytoone}
If $\sup _{x \in \rd} Q(x) < \infty$, then for any $f \in \calB _b (\rd)$, 
\[
	\Ex_x \left[
		Z_t (f)
	\right]
	= 
	E_x \left[e ^{A_t^{(Q-1) \mu}} f (X_t) \right] .
\]
\end{theorem}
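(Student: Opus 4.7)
The plan is to show that both $u(t,x) := \Ex_x[Z_t(f)]$ and $v(t,x) := E_x[e^{A_t^{(Q-1)\mu}} f(X_t)]$ satisfy the same Duhamel-type integral equation, and then to conclude by a Gronwall-type uniqueness argument.

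First, I would condition on the first splitting time $T$ to derive an equation for $u$. Since $\prob_x(T > t \mid \calF_\infty) = e^{-A_t^\mu}$, on $\{T > t\}$ there is a single particle at $X_t$, contributing $E_x[f(X_t) e^{-A_t^\mu}]$. On $\{T \in ds\}$ for $s \le t$, the particle at $X_s$ splits into $k$ offspring with probability $p_k(X_s)$, and by the branching property together with the strong Markov property at $T$ each offspring line is an independent copy of the branching process started from $X_s$. Summing over $k$ with weight $k$ produces the factor $Q(X_s)=\sum_k k p_k(X_s)$, and the conditional ``density'' of $T$ given $\calF_\infty$ is $e^{-A_s^\mu}\, dA_s^\mu$ (since $1 - e^{-A_t^\mu}$ is the conditional distribution function). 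This yields
\begin{equation*}
u(t,x) = E_x\!\left[f(X_t) e^{-A_t^\mu}\right] + E_x\!\left[\int_0^t Q(X_s) \, u(t-s, X_s) \, e^{-A_s^\mu} \, dA_s^\mu\right].
\end{equation*}

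Next, I would show $v$ satisfies the same equation, starting from the pathwise identity
\begin{equation*}
e^{A_t^{(Q-1)\mu}} = e^{-A_t^\mu} + \int_0^t Q(X_s) \, e^{-A_s^\mu} \, e^{A_t^{(Q-1)\mu} - A_s^{(Q-1)\mu}} \, dA_s^\mu,
\end{equation*}
which is verified by using the Revuz relation $dA_s^{Q\mu} = Q(X_s)\, dA_s^\mu$ together with the computation $\int_0^t e^{-A_s^{Q\mu}}\, dA_s^{Q\mu} = 1 - e^{-A_t^{Q\mu}}$, and then simplifying. Multiplying by $f(X_t)$, taking $E_x$, applying Fubini, and invoking the Markov property of $X$ at time $s$ to evaluate $E_x[e^{A_t^{(Q-1)\mu} - A_s^{(Q-1)\mu}} f(X_t) \mid \calF_s] = v(t-s, X_s)$ reproduces exactly the same integral equation as for $u$.

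To conclude, set $w := u - v$. Since $f$ is bounded, $\mu$ is Kato class and $\sup_y Q(y) < \infty$, both $|u|$ and $|v|$ are dominated by $\|f\|_\infty \, E_x[e^{A_t^{(Q-1)\mu}}] < \infty$ (cf.\ \eqref{eq:3-11}), so $w$ is bounded on each $[0,T]\times \rd$ and
\begin{equation*}
|w(t,x)| \le \sup_y Q(y) \cdot E_x\!\left[\int_0^t |w(t-s, X_s)| \, e^{-A_s^\mu} \, dA_s^\mu\right].
\end{equation*}
Iterating this inequality produces a Dyson-type bound of the form $(\text{const})^n/n!$ that forces $w \equiv 0$. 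The main obstacle is the first step: justifying the conditional ``density'' $e^{-A_s^\mu}\, dA_s^\mu$ for $T$ on $\calF_\infty$, invoking the branching property at the random time $T$, and handling the PCAF carefully so that the sum $\sum_k k p_k(X_s)$ may be exchanged with the integral against $dA_s^\mu$ and the outer expectation. Once that bookkeeping is in place, Steps 2 and 3 are essentially routine.
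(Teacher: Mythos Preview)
The paper does not prove this statement at all: it is quoted verbatim as Lemma~3.3 of \cite{S08} and used as a black box, so there is no ``paper's own proof'' to compare against. Your proposed argument---first-branching decomposition for $u$, the additive-functional identity for $v$, and Gronwall/iteration for uniqueness---is the standard route and is essentially what one finds in \cite{S08}; the pathwise identity you wrote down checks out (using $A_t^{(Q-1)\mu}=A_t^{Q\mu}-A_t^\mu$ and $\int_0^t e^{-A_s^{Q\mu}}\,dA_s^{Q\mu}=1-e^{-A_t^{Q\mu}}$), and the iteration in Step~3 produces the bound $\|f\|_\infty\,E_x[(A_t^{Q\mu})^n/n!]$, which is summable since $Q\mu\in\calK$.

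One small point worth tightening: in Step~3 you invoke the a priori bound $|u(t,x)|\le \|f\|_\infty\,E_x[e^{A_t^{(Q-1)\mu}}]$, but that is exactly what you are trying to prove (it amounts to $\Ex_x[N_t]=E_x[e^{A_t^{(Q-1)\mu}}]$). The clean way around this circularity is to run the iteration directly on the integral equation for $u$ without subtracting $v$: iterating the first-branching equation $n$ times yields $u(t,x)=\sum_{k=0}^{n-1}E_x\!\big[e^{-A_t^\mu}(A_t^{Q\mu})^k/k!\,f(X_t)\big]+(\text{remainder})$, and the remainder is controlled by $\|f\|_\infty\,E_x[(A_t^{Q\mu})^n/n!]\to 0$; summing gives $u=v$ and finiteness simultaneously. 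Alternatively one can first prove $\Ex_x[N_t]<\infty$ by a truncation argument and then proceed as you wrote.
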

Similar to \cite[Lemma 3.6]{S18} for the Brownian case, 
we have the historical type Many-to-One lemma below. 
For $u \in Z_t$ and its ancestor $u_0 \in Z_0$, 
the genealogy is unique. 
We write $\{ \bfX ^{(t,u)} _s \}_{0 \le s \le t}$ 
for the historical path between $\bfX _t ^u$ and $\bfX_0 ^{u_0} (= x)$. 
Then we are able to regard the historical path 
as the symmetric $\alpha$-stable path. 

\begin{lemma}\label{lem:histrical-many-to-one}
Let $\mu \in \calK$. 
For any $x \in \rd$, $t > 0$ and $\kappa \ge 0$, 
\[
	\Ex _x \left[
	\sum _{u \in Z_t} \indi _{[\kappa , \infty)} 
	\left( \sup _{0 \le s \le t} \left| \bfX _s ^{(t, u)} \right| \right)
	\right]
	= 
	E_x \left[
		e^{A_t^{(Q-1)\mu}} ; \sup _{0 \le s \le t} |X_s| \ge \kappa
	\right] .
\]
\end{lemma}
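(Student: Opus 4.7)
The plan is to follow the strategy of Shiozawa's Lemma 3.6 in \cite{S18}, adapting the Brownian argument to the $\alpha$-stable setting. The idea is to reduce the historical functional $\sup_{0 \le s \le t}|\bfX_s^{(t,u)}|$ to a finite-dimensional functional by time discretization, apply the endpoint Many-to-One lemma (Theorem \ref{manytoone}) inductively, and then pass to the continuum limit.

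First, I would fix a partition $0 = s_0 < s_1 < \cdots < s_n = t$ and consider the bounded functional $\Phi_n(x_0,\ldots,x_n) = \indi_{[\kappa,\infty)}(\max_{0 \le i \le n}|x_i|)$. The target intermediate identity is
\[
	\Ex_x \left[\sum_{u \in Z_t} \Phi_n\bigl(\bfX_{s_0}^{(t,u)},\ldots,\bfX_{s_n}^{(t,u)}\bigr)\right]
	= E_x\left[e^{A_t^{(Q-1)\mu}}\Phi_n(X_{s_0},\ldots,X_{s_n})\right].
\]
This is proved by induction on $n$. The case $n = 1$ reduces to Theorem \ref{manytoone} applied to $f(y) = \Phi_1(x,y)$. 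For the inductive step, I would condition on $\calG_{s_1}$, invoke the branching Markov property to split the genealogy at time $s_1$ into independent sub-trees rooted at the positions $\{\bfX_{s_1}^v : v \in Z_{s_1}\}$, apply the inductive hypothesis to each sub-tree on $[s_1,t]$, and then apply Theorem \ref{manytoone} again at time $s_1$ to absorb the outer sum. On the analytic side, the additivity $A_t^{(Q-1)\mu} = A_{s_1}^{(Q-1)\mu} + A_{t-s_1}^{(Q-1)\mu}\circ\theta_{s_1}$ and the Markov property of $X$ combine to produce exactly the right-hand side with the full Feynman-Kac weight.

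Second, I would refine the partition. Take a nested sequence of partitions $\Pi_n$ with mesh tending to $0$, containing $\{0,t\}$, and with dense union $D \subset [0,t]$. Since the $\alpha$-stable process has c\`adl\`ag sample paths, one has pathwise
\[
	\max_{s \in \Pi_n} |\bfX_s^{(t,u)}| \;\uparrow\; \sup_{s \in D} |\bfX_s^{(t,u)}| \;=\; \sup_{0 \le s \le t} |\bfX_s^{(t,u)}|,
\]
and the analogous convergence holds for $X$ itself. Applying monotone convergence to both sides of the finite-dimensional identity gives the claim. Integrability allowing the use of monotone/Fubini convergence on the left follows from Theorem \ref{manytoone} with $f \equiv 1$ together with \eqref{eq:3-11}, which shows that $\Ex_x[\#Z_t] = E_x[e^{A_t^{(Q-1)\mu}}]$ is finite for each $t$.

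The main obstacle is the limiting step: the indicator $\indi_{[\kappa,\infty)}$ is discontinuous, so one must rely on the monotonicity of the running supremum under partition refinement, and one must handle carefully the c\`adl\`ag aspect of the $\alpha$-stable paths (the sup over a dense set including endpoints coincides with the sup over $[0,t]$ for c\`adl\`ag trajectories). A minor subtlety is that on the event where $\sup_{0 \le s \le t}|X_s| = \kappa$ the discrete approximations may not reach the threshold, but this event has probability zero for each fixed $\kappa$ by absolute continuity of the time-$t$ marginal of $X$ under $E_x$ (Lemma \ref{lem:Shio}), so it does not affect the expectations on either side.
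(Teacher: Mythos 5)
Your approach is genuinely different from the paper's. The paper decomposes $Z_t$ by the number of branches $n$ along the historical path, proves by induction on $n$ that
\[
\Ex_x\left[\sum_{u\in Z_t(n)}\indi_{[\kappa,\infty)}\left(\sup_{0\le s\le t}|\bfX_s^{(t,u)}|\right)\right]
= E_x\left[e^{-A_t^\mu}\frac{(A_t^{Q\mu})^n}{n!};\ \sup_{0\le s\le t}|X_s|\ge\kappa\right],
\]
and then sums over $n$; the first-branch time $T$ plays the role that the partition point $s_1$ plays in your argument. Your route instead discretizes time, proves a finite-dimensional Many-to-One identity by induction on the number of partition points, and passes to the limit. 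Both inductive arguments are sound, and your version has the advantage that the finite-dimensional Many-to-One identity is an independently useful statement. (Two small inaccuracies: the step conditioning on $\calG_{s_1}$ needs the inductive hypothesis for \emph{arbitrary} bounded Borel $\Phi$, not just the specific running-maximum indicator, so it is cleaner to phrase the induction for general $\Phi$; and the integrability you need is $E_x[e^{A_t^\nu}]<\infty$ for fixed $t$, which comes from the Kato-class bound of \cite[Theorem 6.1]{ABM91} rather than from \eqref{eq:3-11}, which only controls the $t\to\infty$ asymptotics.)

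The real gap is in the passage to the limit. Since $M_n:=\max_{s\in\Pi_n}|X_s|\uparrow M_\infty:=\sup_{0\le s\le t}|X_s|$, the indicators $\indi_{[\kappa,\infty)}(M_n)$ need not increase to $\indi_{[\kappa,\infty)}(M_\infty)$: they fail to do so precisely on $\Omega_0:=\{M_\infty=\kappa,\ M_n<\kappa\ \forall n\}$. You assert $P_x(\Omega_0)=0$ ``by absolute continuity of the time-$t$ marginal,'' but that is not a valid reason: continuity of the law of $X_t$ says nothing about atoms of $\sup_{s\le t}|X_s|$ (a process can have continuous marginals while its running supremum has an atom). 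The fact that $P_x(\mathcal M_t=\kappa)=0$ is true for the symmetric $\alpha$-stable process, but it is a statement about the supremum and needs its own argument or reference (e.g.\ via \cite{KR}). Moreover, even granting that fact, your monotone-convergence identity becomes
\[
\Ex_x\Bigl[\sum_{u}\indi_{[\kappa,\infty)}(M_\infty^u)\Bigr]-\Ex_x\Bigl[\sum_u\indi_{\Omega_0^u}\Bigr]
= E_x\bigl[e^{A_t^\nu}\indi_{[\kappa,\infty)}(M_\infty)\bigr],
\]
and the left-hand error term $\Ex_x[\sum_u\indi_{\Omega_0^u}]$ is an expectation of a \emph{sum over the random tree}, not a $P_x$-probability, so its vanishing does not follow from $P_x(\Omega_0)=0$; you would be implicitly invoking the historical Many-to-One identity you are trying to prove. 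The cleanest repair is to avoid the atom issue entirely: run the monotone-convergence argument with the open-interval indicator $\indi_{(\kappa',\infty)}$ for $\kappa'<\kappa$ (here $\indi_{(\kappa',\infty)}(M_n)\uparrow\indi_{(\kappa',\infty)}(M_\infty)$ exactly, with no exceptional event), and then let $\kappa'\uparrow\kappa$, using $\bigcap_{\kappa'<\kappa}(\kappa',\infty)=[\kappa,\infty)$ and dominated convergence with the integrable majorant $\sum_u 1$. With that modification your argument closes.
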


\section{Proof of Theorem \ref{theo:main}}
In this section, 
we always assume that the branching rate measure $\mu$ satisfies 
Assumption \ref{ass1}. 
We set $\nu = (Q-1) \mu$ and 
$\lambda = \lambda (\nu) < 0$. 
For $\lambda$ and the corresponding eigenfunction $h$, 
we define $M_t$ by \eqref{mart}. 
Additionally, the limit $M_\infty$ exists in $[0, \infty)$, $\prob_x$-a.s. 
and $\prob_x (M_\infty > 0) > 0$. 

Shiozawa proved the same claim for the Brownian case as 
Lemmas \ref{lem:3-7}--\ref{lem:liminf} below 
(Lemma 3.7--3.9 in \cite{S18}). 
The $\alpha$-stable cases are also proved by his method. 
In the proofs of Lemmas \ref{lem:limsup} and \ref{lem:liminf},  
a change is that 
we use the running maximum of the $\alpha$-stable process. 

In our argument, letter `$x$'  is the fixed starting point. 
Other letters as starting points are variable. 
\begin{lemma}[cf. Lemma 3.7 in \cite{S18}]\label{lem:3-7}
For any $x \in \rd$, 
\[
	\lim_{t \to \infty} \frac{1}{t} \log N_t 
	= 
	\lim_{t \to \infty} \frac{1}{t} 
	\log \Ex _x [N_t ] 
	= 
	-\lambda , \qquad 
	\prob _x (\cdot \mid M_{\infty} > 0)\text{-a.s.}
\]
\end{lemma}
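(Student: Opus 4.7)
The plan is to establish the two equalities separately by combining the Many-to-One identity, the Feynman--Kac asymptotic \eqref{eq:3-11}, a Borel--Cantelli argument along integer times, and the positivity of the martingale limit $M_\infty$. For the expectation, I take $f \equiv 1$ in Theorem \ref{manytoone}; since $N_t = Z_t(1)$, this yields $\Ex_x[N_t] = E_x[e^{A_t^\nu}]$, and \eqref{eq:3-11} gives $\Ex_x[N_t] \asymp e^{-\lambda t}$, so $t^{-1}\log \Ex_x[N_t] \to -\lambda$ as $t \to \infty$.

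For the $\prob_x$-a.s.\ upper bound, I apply Markov's inequality along integer times: for any $\varepsilon > 0$ and $n \in \mathbb{N}$,
\[
	\prob_x\!\left(N_n \ge e^{(-\lambda+\varepsilon)n}\right)
	\le e^{(\lambda-\varepsilon)n}\, \Ex_x[N_n]
	\lesssim e^{-\varepsilon n},
\]
which is summable. The Borel--Cantelli lemma then gives $N_n \le e^{(-\lambda+\varepsilon)n}$ eventually, $\prob_x$-a.s. Because no particle is killed and each splitting produces at least one offspring, $t \mapsto N_t$ is non-decreasing; hence for $t \in [n,n+1]$ one has $N_t \le N_{n+1}$, which yields $\limsup_{t \to \infty} t^{-1}\log N_t \le -\lambda + \varepsilon$. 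Letting $\varepsilon \downarrow 0$ gives the desired upper bound.

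For the lower bound on $\{M_\infty > 0\}$, I exploit the martingale $M_t = e^{\lambda t} Z_t(h)$ from \eqref{mart}, which converges $\prob_x$-a.s.\ to $M_\infty \in [0,\infty)$. Since $h$ is bounded, $Z_t(h) \le \|h\|_\infty N_t$, so
\[
	N_t \ge \frac{e^{-\lambda t} M_t}{\|h\|_\infty},
	\qquad\text{hence}\qquad
	\frac{1}{t}\log N_t \ge -\lambda + \frac{1}{t}\bigl(\log M_t - \log\|h\|_\infty\bigr).
\]
On $\{M_\infty > 0\}$ the right-hand side tends to $-\lambda$, which combined with the upper bound yields the claim. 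The only real subtlety is the passage from the discrete-time Borel--Cantelli bound to continuous time, which is handled cleanly by the monotonicity of $N_t$; I expect the harder analogues in Lemmas \ref{lem:limsup}--\ref{lem:liminf} to require the running-maximum tail estimate of Lemma \ref{lem:tailmax} in place of this simple monotonicity, since $N_t^\kappa$ for $\kappa > 0$ is no longer monotone in $t$.
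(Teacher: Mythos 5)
Your proposal is correct and follows essentially the same route as the paper: Many-to-One plus \eqref{eq:3-11} for the expectation, a Markov-inequality/Borel--Cantelli argument along integer times combined with monotonicity of $N_t$ for the $\limsup$, and the bounded eigenfunction $h$ with the a.s.\ convergence of $M_t$ for the $\liminf$ on $\{M_\infty>0\}$. The only cosmetic difference is the exact form of the threshold in the Markov-inequality step, which does not affect the argument.
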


\begin{proof}
Theorem \ref{manytoone} leads to 
$\Ex _x [N_t] = E_x \left[ e^{A_t^\nu} \right]$. 
By \eqref{eq:3-11}, we have the second equation.  

Since $h$ is bounded, 
\[
	M_t = e^{\lambda t} Z_t (h) \le e^{\lambda t} 
	\| h \| _{\infty} N_t . 
\]
Because $M_\infty < \infty$, $\prob_x$-a.s., 
we have
\begin{eqnarray}
	\liminf _{t \to \infty} \frac{1}{t} 
		\log N_t 
	&\ge & 
	\liminf_{t \to \infty} \frac{1}{t} \log 
		\left(
			e^{- \lambda t} \| h \| _{\infty} ^{-1} M_t
		\right)
	\nonumber 
	\\
	&=& 
	- \lambda 
	+ 
	\liminf _{t \to \infty} \frac{1}{t} \log M_t 
	= 
	-\lambda ,
	\label{eq:20230210-2}
\end{eqnarray}
$\prob_x (\cdot \mid M_\infty > 0)$-a.s.

By \eqref{eq:3-11}, 
for any $\varepsilon >0$, 
there exist positive constants $c$ and $T$ such that 
$E_x [e^{A_t ^\nu}] \le c e^{-\lambda t}$, $t > T$. 
By Chebyshev's inequality, 
\[
	\prob _x \left(
		e^{(\lambda - \varepsilon ) t} N_t > \varepsilon
	\right)
	\le 
	\frac{e^{(\lambda - \varepsilon ) t}}{\varepsilon} 
	\Ex _x [N_t] 
	=
	\frac{e^{(\lambda - \varepsilon ) t}}{\varepsilon} 
	E_x \left[
		e^{A_t ^{\nu}}
	\right] 
	\le 
	\frac{c}{\varepsilon} e^{- \varepsilon t} , 
	\quad t > T .
\]
Thus, for $N >T$, 
\[
	\sum _{n=1} ^{\infty} 
	\prob _x \left(
		e^{(\lambda - \varepsilon) n} N_n > \varepsilon 
	\right)
	\le 
	N
	+ 
	\frac{c}{\varepsilon} 
	\sum_{n=N+1} ^{\infty} e^{- \varepsilon n} 
	< \infty . 
\] 
The Borel-Cantelli lemma yields 
\[ 
	\prob_x \left(
		\bigcap _{n=1} ^{\infty} 
		\bigcup _{k=n} ^{\infty}
		\left\{ 
			e^{(\lambda - \varepsilon)k} N_k > \varepsilon 
		\right\}
	\right)
	= 0 .
\] 
Namely, for almost all $\omega$, 
there exists $n = n(\omega) \in \n$ such that 
\[
	e^{(\lambda - \varepsilon) k} N_k (\omega ) \le \varepsilon, 
	\quad \text{for all } k \ge n .
\]
Since $p_0 \equiv 0$, i.e., $N_t$ is nondecreasing, 
for any $t>n$, 
\[
	N_t (\omega) \le N_{[t]+1} (\omega)
	\le 
	\varepsilon e^{-(\lambda - \varepsilon) ([t] + 1)} 
	\le 
	\varepsilon e^{(- \lambda + \varepsilon)(t+1)} , 
\]
where $[t]$ is the greatest integer such that $[t] \le t$. 
From this, it follows that 
\[
	\limsup _{t \to \infty} \frac{1}{t} \log N_t (\omega)
	\le 
	- \lambda + \varepsilon , 
	\qquad \prob _x \text{-a.a. } \omega .
\] 
In a general manner, 
letting $\varepsilon \downarrow 0$, we obtain 
\begin{equation}
	\prob _x \left( 
		\limsup _{t \to \infty} 
		\frac{1}t \log N_t  \le - \lambda 
	\right) 
	= 1.
	\label{eq:20230210-3}
\end{equation}

From \eqref{eq:20230210-2} and \eqref{eq:20230210-3}, 
we come to the conclusion. 
\end{proof} 

\begin{lemma}\label{lem:limsup}
For $\delta > 0$, 
we set 
$\kappa _\delta (t) = e^{\delta t} a(t)$, 
where $a(t) > 0$ is monotone increasing 
and $t^{-1} \log a(t) \to 0$, $t \to \infty$. 
Then, for any $x \in \rd$, 
\[
	\limsup _{t \to \infty} 
	\frac{1}{t} \log N_t ^{\kappa _\delta (t)}
	\le 
	-\lambda - \alpha \delta 
	, 
	\qquad 
	\prob _x \text{-a.s.}
\]
\end{lemma}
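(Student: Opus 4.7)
Following the template of Lemma~\ref{lem:3-7}, the plan is to apply the Borel--Cantelli lemma along the sequence of integers. The additional subtlety here is that $N_t^{\kappa_\delta(t)}$, unlike $N_t$, is not monotone in $t$, so I cannot sandwich $N_t^{\kappa_\delta(t)}$ between integer-time values. I will therefore produce, for each $n$, a single random variable $\wt N_{n+1}$ that dominates $\sup_{t \in [n, n+1]} N_t^{\kappa_\delta(t)}$ and whose expectation can be estimated via the historical Many-to-One lemma.

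\textbf{Domination by a historical count.} Fix $\varepsilon > 0$. Since $\kappa_\delta$ is monotone increasing, $N_t^{\kappa_\delta(t)} \le N_t^{\kappa_\delta(n)}$ for $t \in [n, n+1]$. Because $p_0 \equiv 0$, every particle $v \in Z_t$ has at least one descendant in $Z_{n+1}$; descendants of distinct $v$'s are themselves distinct, and the ancestral path of any such descendant $u$ passes through $\bfX_t^v$. Consequently, for every $t \in [n, n+1]$,
\[
N_t^{\kappa_\delta(t)} \;\le\; \wt N_{n+1} \;:=\; \sum_{u \in Z_{n+1}} \indi_{[\kappa_\delta(n), \infty)}\!\Big(\sup_{0 \le s \le n+1} \bigl| \bfX_s^{(n+1, u)} \bigr|\Big),
\]
so $\sup_{t \in [n,n+1]} N_t^{\kappa_\delta(t)} \le \wt N_{n+1}$, and by Lemma~\ref{lem:histrical-many-to-one},
\[
\Ex_x\bigl[\wt N_{n+1}\bigr] = E_x\!\left[ e^{A_{n+1}^\nu} ; \calM_{n+1} \ge \kappa_\delta(n) \right].
\]

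\textbf{First-moment estimate and Borel--Cantelli.} The key analytic input is
\[
E_x\!\left[ e^{A_{n+1}^\nu} ; \calM_{n+1} \ge \kappa_\delta(n) \right] \;\lesssim\; \kappa_\delta(n)^{-\alpha} e^{-\lambda(n+1)}.
\]
I would obtain this by splitting on $\{|X_{n+1}| \ge \kappa_\delta(n)/2\}$ versus $\{|X_{n+1}| < \kappa_\delta(n)/2\}$: \eqref{eq:3-1} handles the first event directly, and on the second event the strong Markov property at $\tau := \inf\{s \ge 0 : |X_s| \ge \kappa_\delta(n)\}$, together with \eqref{eq:3-11} (noting that $\lambda<0$ gives $e^{\lambda \tau} \le 1$), the compactness of $\supp \mu$, and Lemma~\ref{lem:tailmax} applied to the post-$\tau$ ``return from infinity'' excursion, produce the same $\kappa_\delta(n)^{-\alpha}$ factor. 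Given this, Chebyshev's inequality with $c_n := e^{(-\lambda - \alpha\delta + \varepsilon)(n+1)}$ yields
\[
\prob_x\!\left( \sup_{t \in [n, n+1]} N_t^{\kappa_\delta(t)} > c_n \right) \;\lesssim\; e^{-\varepsilon(n+1)} a(n)^{-\alpha},
\]
which is summable since $a$ monotone increasing forces $a(n)^{-\alpha}$ to be bounded above. The Borel--Cantelli lemma and letting $\varepsilon \downarrow 0$ along a countable sequence then complete the proof.

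\textbf{Main obstacle.} The hardest step is the first-moment estimate: transferring the known asymptotics \eqref{eq:3-1} from the endpoint constraint $\{|X_t| \ge \kappa\}$ to the running-maximum constraint $\{\calM_t \ge \kappa\}$ while preserving the tight exponent $\alpha$. The Brownian argument of \cite{S18} uses the reflection principle, which is not available for general stable processes; the replacement is the heavy-tail estimate of Lemma~\ref{lem:tailmax}, and the care required is to couple it with the Feynman--Kac exponential $e^{A_{n+1}^\nu}$ in a way that produces $\kappa^{-\alpha}$ rather than the weaker $\kappa^{-\alpha/2}$ that a crude Cauchy--Schwarz would yield.
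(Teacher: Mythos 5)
Your overall framework---dominate $N_t^{\kappa_\delta(t)}$ on $t\in[n,n+1]$ by a single historical count $\wt N_{n+1}$, estimate its first moment via the historical Many-to-One lemma, then apply Chebyshev and Borel--Cantelli---is the same strategy as the paper's, and the domination step is correct. But the domination you chose is strictly weaker than the paper's, and that shift makes the moment estimate genuinely hard; the sketch you give for it does not close.

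\textbf{The dominations differ.} The paper bounds $N_t^{\kappa_\delta(t)}$ for $t\in[i,i+1]$ by $\sum_{u\in Z_{i+1}}\indi_{[\kappa_\delta(i),\infty)}\bigl(\sup_{i\le s\le i+1}|\bfX_s^{(i+1,u)}|\bigr)$: only the running maximum over the \emph{last unit interval} matters. Your $\wt N_{n+1}$ instead tests the running maximum over all of $[0,n+1]$. Both inclusions are valid, but your event is larger, and the resulting first-moment target
\[
E_x\bigl[e^{A_{n+1}^\nu};\ \calM_{n+1}\ge\kappa_\delta(n)\bigr]\lesssim\kappa_\delta(n)^{-\alpha}\,e^{-\lambda(n+1)}
\]
is correspondingly harder than the paper's $E_x[e^{A_{i+1}^\nu};\,\sup_{i\le s\le i+1}|X_s|\ge\kappa_\delta(i)]$.

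\textbf{Where the sketch fails.} On $\{\calM_{n+1}\ge\kappa_\delta(n),\ |X_{n+1}|<\kappa_\delta(n)/2\}$, applying the strong Markov property at $\tau:=\inf\{s:|X_s|\ge\kappa_\delta(n)\}$ gives
\[
E_x\bigl[e^{A_\tau^\nu}\,\psi(X_\tau,n+1-\tau);\ \tau\le n+1\bigr],\qquad
\psi(y,T):=E_y\bigl[e^{A_T^\nu};\ |X_T|<\kappa_\delta(n)/2\bigr].
\]
For $|y|\ge\kappa_\delta(n)$ one can only show $\psi(y,T)\lesssim T\,\kappa_\delta(n)^{-\alpha}\,e^{-\lambda T}$ (the process must jump back without touching $\supp\mu$, or hit $\supp\mu$ and then accumulate). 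Bounding $e^{A_\tau^\nu}$ and $\psi$ separately gives
\[
\lesssim\ \kappa_\delta(n)^{-\alpha}\,(n+1)\,e^{-\lambda(n+1)}\cdot E_x\bigl[e^{A_\tau^\nu};\tau\le n+1\bigr]
\ \lesssim\ (n+1)\,\kappa_\delta(n)^{-\alpha}\,e^{-2\lambda(n+1)},
\]
a \emph{doubled} factor $e^{-\lambda(n+1)}$, which the Chebyshev prefactor cannot absorb. The remark that $\lambda<0$ gives $e^{\lambda\tau}\le 1$ does not help: the natural Feynman--Kac martingale $e^{\lambda t+A_t^\nu}h(X_t)$ carries the ground state $h$, which vanishes at infinity, so you cannot drop $h(X_\tau)$ when $|X_\tau|\ge\kappa_\delta(n)$. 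To avoid the overcount one must keep the pre-$\tau$ and post-$\tau$ accumulations coupled through the common time budget $n+1$, and your sketch does not do this.

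\textbf{What a repair would cost.} The claimed estimate is in fact true up to a polynomial factor in $n$ and an arbitrarily small exponential correction, but I only see it via $\{\calM_{n+1}\ge\kappa\}\subset\bigcup_{j=0}^{n}\{\sup_{j\le s\le j+1}|X_s|\ge\kappa\}$, bounding each $E_x[e^{A_{n+1}^\nu};\sup_{j\le s\le j+1}|X_s|\ge\kappa]$ by the Markov property at time $j+1$, and then invoking the paper's per-interval analysis (the split on $|X_{j+1}|$ and $|X_j|$ relative to an auxiliary threshold $\kappa_\eta$, plus H\"older and Lemma~\ref{lem:tailmax} when $|X_j|$ is small). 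That is strictly more work than the paper's route, where the tighter domination to $\sup_{i\le s\le i+1}$ eliminates the union bound and confines the difficult jump to a single unit interval. So the proposal has a genuine gap at its central estimate, and filling it would not simplify the published argument.
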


\begin{proof}
For $u \in Z_t$, 
Let us denote by 
$\{ \bfX ^{(t,u)} _s \}_{0 \le s \le t}$ 
the historical path connecting $\bfX _t ^u$ and $\bfX_0 = x$. 
For given $t>0$, 
we choose $i \in \n $ with $i \le t < i+1$. 
Then, 
\begin{equation}
	N_t ^{\kappa } 
	\le 
	\sum _{u \in Z_{i+1}} 
	\indi _{[\kappa , \infty)}
	\left( 
		\ds\sup _{i \le s \le i+1} 
		\left| \bfX _s ^{(i+1 , u)} \right|
	\right) , 
	\qquad \kappa > 0 .
	\label{eq:3-25}
\end{equation}
For $\varepsilon > 0$, 
\[
	A _i = A_i (\varepsilon) := 
	\left\{ 
		\sum_{u \in Z_{i+1}} 
			\indi _{[\kappa _\delta (i), \infty)}
			\left(
			\ds\sup _{i \le s \le i+1} 
				\left|
					\bfX _s ^{(i+1 , u)}
				\right|
			\right)
		> 
		e^{(-\lambda - \alpha \delta + \varepsilon) i}
	\right\}, 
	\qquad i \ge 1 .  
\]
It is sufficient to show that 
there exist $c_0 > 0$ and $I_0 \in \n$ such that 
\begin{equation}
	\prob _x \left( A_i \right)
	\le 
	e^{- c_0 i}, 
	\qquad i \ge I_0.
	\label{eq:3-26}
\end{equation}
Indeed, if \eqref{eq:3-26} holds, then 
\[
	\sum _{i=1} ^{\infty} 
	\prob _x (A_i) 
	\le 
	I_0 + \sum_{i = I_0+1} ^{\infty} e^{-c_0 i}
	< \infty , 
\]
which implies 
\[
	\prob _x \left(
		\bigcup _{n=1} ^{\infty} 
		\bigcap _{i=n} ^{\infty}
		A_i ^c
	\right)
	= 1 , 
\] 
by the Borel-Cantelli lemma. 
That is, 
for almost all $\omega$, 
there exists $I = I (\omega) \in \n$ 
such that $\omega \in A_i ^c$ for all $i \ge I$. 
For $t>I$, we choose $i \in \n$ with $i \le t < i+1$. 
Since $\kappa _\delta (t)$ is monotone increasing, 
\begin{eqnarray*} 
	N_t ^{\kappa _\delta (t)} (\omega )
	& \underset{\eqref{eq:3-25}}{\le} & 
	\sum_{u \in Z_{i+1}} 
		\indi _{[\kappa _\delta (t) , \infty)}
		\left(
		\ds\sup _{i \le s \le i+1} 
		\left|
			\bfX _s ^{(i+1 , u)} (\omega )
		\right|
		\right)
	\\ 
	&\le &
	\sum_{u \in Z_{i+1}} 
		\indi _{[\kappa _\delta (i) , \infty)}
		\left(
		\ds\sup _{i \le s \le i+1} 
		\left|
			\bfX _s ^{(i+1 , u)} (\omega )
		\right|
		\right)
	\\ 
	&\le & 	
	e^{(-\lambda - \alpha\delta + \varepsilon) i} .
\end{eqnarray*}
In general, 
$e^{Ai} \le (1 \vee e^{-A}) e^{At}$, 
for any $A \in \R$ and $i \le t$. 
Hence, for $\prob _x$-a.a. $\omega$, 
$$  
	\limsup _{t \to \infty}
	\dfrac{1}t
	\log N_t ^{\kappa _\delta (t)} (\omega)
	\le 
	\limsup _{t \to \infty}
	\frac{1}t
	\log \left\{ 1 \vee e^{-(-\lambda - \alpha \delta + \varepsilon)} \right\}
	e^{(-\lambda - \alpha\delta + \varepsilon)t}
	= 
	-\lambda - \alpha \delta + \varepsilon . 
$$
Since the above holds for all $\varepsilon > 0$, we have  
$$
	\prob _x 
	\left( 
		\limsup _{t \to \infty} 
		\frac{1}t \log Z_t ^{\kappa _\delta (t)}
		\le - \lambda - \alpha \delta
	\right) 
	= 1 .
$$

From now on, we show the existence of 
$c_0>0$ and $I _0 \in \n$ which satisfy \eqref{eq:3-26}, 
for each $\varepsilon > 0$. 
By Chebyshev's inequality and the Markov property, 
\begin{eqnarray}
	\prob _x (A_i) 
	&\le &
	e ^{(\lambda + \alpha \delta - \varepsilon)i} 
	\Ex _x \left[
		\sum_{u \in Z_{i+1}} 
			\indi _{[\kappa _\delta (i) , \infty )}
				\left(
					\ds\sup _{i \le s \le i+1} 
					\left| \bfX _s ^{(i+1 , u)} \right|
				\right) 
	\right]
	\nonumber
	\\ 
	&=&
	e ^{(\lambda + \alpha \delta - \varepsilon)i} 
	\Ex _x  
	\left[
		\Ex _{\bfX _i}
		\left[
			\sum_{u \in Z_{1}} 
			\indi _{[\kappa _\delta(i), \infty)} 
			\left(
				\ds\sup _{0 \le s \le 1} 
				\left| \bfX _s ^{(1 , u)} \right|
			\right)
		\right]
	\right] 
	\nonumber
	\\ 
	&=&
	e ^{(\lambda + \alpha \delta - \varepsilon)i} 
	\Ex _x  
	\left[
		\sum _{v \in Z_i}
			\Ex _{\bfX _i ^v}
			\left[
			\sum_{u \in Z_1} 
			\indi _{[\kappa _\delta (i), \infty)} 
			\left(
				\ds\sup _{0 \le s \le 1} 
				\left| \bfX _s ^{(1 , u)} \right|
			\right)
			\right]
	\right]	.
	\label{eq:20180214-1}
\end{eqnarray}
By Lemma \ref{lem:histrical-many-to-one}, 
\[
	\Ex _y
	\left[
		\sum_{u \in Z_1} 
			\indi _{[\kappa _\delta (i), \infty)} 
			\left(
				\ds\sup _{0 \le s \le 1} 
				\left| \bfX _s ^{(1 , u)} \right|
			\right)
	\right]	
	= 
	E_y
	\left[
		e^{A_1 ^\nu} 
		\; ; \; 
		\sup _{0 \le s \le 1} |X_s| \ge \kappa _\delta (i)
	\right] 
	=: 
	f \left( y \right).
\]
For the above $f \in \calB _b (\rd)$, 
we use Theorem \ref{manytoone} to \eqref{eq:20180214-1}. 
Thus, for any $i \ge 1$, 
\begin{eqnarray*}
	\prob _x (A_i)
	&\le & 
	e ^{(\lambda + \alpha \delta - \varepsilon)i} 
	\Ex _x  
	\left[
		\sum _{v \in Z_i}
		f \left( \bfX _i ^v \right) 
	\right]
	= 
	e ^{(\lambda + \alpha \delta - \varepsilon)i} 
	E _x \left[ e^{A_i ^\nu} f (X_i) \right] 
	\\
	&=&
	e ^{(\lambda + \alpha \delta - \varepsilon)i} 
	E _x \left[ 
		e^{A_i ^\nu}  
		E_{X _i} 
		\left[
			e^{A_1 ^\nu} \; ; \; 
			\sup _{0 \le s \le 1} |X_s| \ge \kappa _\delta (i)
		\right] 
	\right]
	\\
	&=& 
	e ^{(\lambda + \alpha \delta - \varepsilon)i} 
	E _x \left[ 
		e^{A_i ^\nu + A_1 ^\nu \circ \theta _i} \; ; \; 
			\sup _{0 \le s \le 1} |X_{i+s}| \ge \kappa _\delta (i)
	\right]	 
	\\
	&=& 
	e ^{(\lambda + \alpha \delta - \varepsilon)i} 
	E _x \left[ 
		e^{A_{i+1} ^\nu} \; ; \; 
			\sup _{i \le s \le i+1} |X_s| \ge \kappa _\delta (i)
	\right]	 .
\end{eqnarray*}
Let $\eta \in (0,\delta)$. 
We divide the above expectation into (I)$+$(II) as follows:  
\begin{eqnarray*}
	&& 
	E _x \left[ 
		e^{A_{i+1} ^\nu} \; ; \; 
			\sup _{i \le s \le i+1} |X_s| \ge \kappa _\delta (i)
	\right] 
	\\ 
	&=&
	E _x \left[ 
			e^{A_{i+1} ^\nu} \; ; \; 
			\sup _{i \le s \le i+1} |X_s| \ge \kappa _\delta (i), \;
			|X_{i+1}| \ge \kappa _\eta (i+1)
	\right] 
	\\ 
	&& \quad 
	+
	E _x \left[ 
			e^{A_{i+1} ^\nu} \; ; \; 
			\sup _{i \le s \le i+1} |X_s| \ge \kappa _\delta (i), \;
			|X_{i+1}| < \kappa _\eta (i+1)
	\right] 
	\\
	&=:& 
	\text{(I)} + \text{(II)} .
\end{eqnarray*}
By \eqref{eq:3-1}, 
for any $\varepsilon>0$ and $\eta \in (0, \delta)$, 
there exists $T_1 >0$ ($T_1=T_1(\eta , \varepsilon)$) 
such that 
\begin{equation}
	E_x \left[ e^{A_t ^\nu} \; ; \; |X_t| > \kappa _\eta (t) \right]
	< 
	e^{(-\lambda -\alpha \eta) t} , \qquad 
	t > T_1 .
	\label{eq:20230509-1}
\end{equation}
Here, we choose the same $\varepsilon$ as in $A_i (\varepsilon)$ and fix it, 
and then, we consider $T_1 = T_1 (\eta)$. 
When $i > T_1$, 
\begin{equation} 
	\text{(I)}
	\le 
	E _x \left[ 
		e^{A_{i+1} ^\nu} \; ; \; 
		|X_{i+1}| \ge \kappa _\eta (i+1)
	\right]	
	\le
	e^{(-\lambda -\alpha \eta) (i+1)} .
	\label{eq:PartI}
\end{equation}
We divide (II) into (III)$+$(IV) as follows: 
\begin{eqnarray*}
	\text{(II)} 
	&=& 
	E _x \left[ 
		e^{A_{i+1} ^\nu} 
		\; ; \; 
		\sup _{i \le s \le i+1} |X_s| \ge \kappa _\delta (i), \;
		|X_i| \ge \kappa _\eta (i), \; 
		|X_{i+1}| < \kappa _\eta (i+1)
	\right] 
	\\
	&& \quad + 
	E _x \left[ 
		e^{A_{i+1} ^\nu} 
		\; ; \; 
		\sup _{i \le s \le i+1} |X_s| \ge \kappa _\delta (i), \;
		|X_i| < \kappa _\eta (i), \; 
		|X_{i+1}| < \kappa _\eta (i+1)
	\right] 	
	\\ 
	&\le &
	E _x \left[ 
		e^{A_{i+1} ^\nu} 
		\; ; \; 
		|X_i| \ge \kappa _\eta (i)
	\right] 
	+
	E _x \left[ 
		e^{A_{i+1} ^\nu} 
		\; ; \; 
		\sup _{i \le s \le i+1} |X_s| \ge \kappa _\delta (i), \;
		|X_i| < \kappa _\eta (i) 
	\right] 
	\\
	&=:&
	\text{(III)}+ \text{(IV)}.
\end{eqnarray*}
By the Markov property, 
\[
	\text{(III)}
	= 
	E _x \left[
		e^{A_i ^\nu} 
		E_{X_i} \left[
			e^{A_1 ^\nu} 
		\right]
		\; ; \; 
		|X_i| \ge \kappa _\eta (i)
	\right] 
	\le
	\left( 
		\sup _{y \in \rd}E_y \left[ e^{A_1 ^\nu} \right]
	\right) 
	E _x \left[
		e^{A_i ^\nu} 
		\; ; \; 
		|X_i| \ge \kappa _\eta (i)
	\right] . 
\] 
\cite[Theorem 6.1]{ABM91} leads to 
$\sup _{y \in \rd}E_y [ e^{A_1 ^\nu} ] < \infty$. 
Thus, by \eqref{eq:20230509-1},  
\begin{equation}
	\text{(III)}
	\le 
	c e^{(-\lambda -\alpha \eta) i} , 
	\qquad i > T_1 .
	\label{eq:PartIII}
\end{equation}
Then, by the Markov property, 
\begin{equation}
	\text{(IV)}
	=
	E _x \left[ 
		e^{A_i ^\nu} 
		E_{X_i} \left[ 
			e^{A_1 ^\nu}
			\; ; \; 
			\sup _{0 \le s \le 1} |X_s| \ge \kappa _\delta (i)
		\right] 
		\; ; \; 
		|X_i| < \kappa _\eta (i)
	\right] .
	\label{eq:20230425-2}
\end{equation}
By H\"older's inequality, 
for any $\theta > 1$, 
\begin{equation} 
	E_y \left[ 
		e^{A_{1} ^\nu} 
		\; ; \; 
		\sup _{0 \le s \le 1} |X_s| \ge \kappa _\delta (i)
		\right] 
	\le 
	E_y \left[ 
		e^{ \frac{\theta}{\theta -1} A_1 ^\nu } 
	\right]^{1 -1/\theta }  
	P_y \left( 
		\sup _{0 \le s \le 1} |X_s| \ge \kappa _\delta (i)
	\right) ^{1/\theta} , 
	\label{eq:20230425-1}
\end{equation}
where $|y| \le \kappa _\eta (i)$ $(<\kappa _\delta (i))$. 
On the right side, the expectation is bounded above by some constant $C = C_\theta$, 
due to \cite[Theorem 6.1]{ABM91}. 
By Lemma \ref{lem:tailmax}, 
there exist $C'>0$ and $T_2 > T_1$ such that, 
for any $i \ge T_2$ and $|y| \le \kappa _\eta (i)$, 
\[
	P_y \left( 
		\sup _{0 \le s \le 1} |X_s| \ge \kappa _\delta (i)
	\right) 
	\le 
	C' \left( \kappa _\delta (i) - \kappa _\eta (i) \right) ^{-\alpha} .
\] 
For $i \ge T_2$, we have by \eqref{eq:20230425-1}, 
\[
	\sup _{|y| < \kappa _\eta (i)} 
	E_y \left[ 
		e^{A_{1} ^\nu} 
		\; ; \; 
		\sup _{0 \le s \le 1} |X_s| \ge \kappa _\delta (i)
	\right] 
	\le 
	C \left( \kappa _\delta (i) - \kappa _\eta (i) \right) ^{-\alpha / \theta }. 
\]
Hence, by \eqref{eq:20230425-2}, 
\begin{eqnarray*}
	\text{(IV)} 
	&\le &
	C \left( \kappa _\delta (i) - \kappa _\eta (i) \right) ^{-\alpha / \theta }
	E _x \left[ 
		e^{A_i ^\nu} 
		\; ; \; 
		|X_i| < \kappa _\eta (i)
	\right] 
	\\
	&\le &
	C \left( \kappa _\delta (i) - \kappa _\eta (i) \right) ^{-\alpha / \theta }
	E _x \left[ e^{A_i ^\nu} \right] , 
\end{eqnarray*}
for $i \ge T_2$. 
According to \eqref{eq:3-11}, we choose $T_3 \ge T_2$ such that 
$E_x [e^{A_t ^\nu}] \le c e ^{-\lambda t}$, $t \ge T_3$. 
Therefore, for $i \ge T_3$, 
\begin{eqnarray}
	\text{(IV)} 
	&\le &
	C \left( \kappa _\delta (i) - \kappa _\eta (i) \right) ^{-\alpha / \theta } 
	e^{-\lambda i}
	\nonumber
	\\
	&=&
	C \left\{
		e ^{\delta i} a(i) \left( 1 - e^{-(\delta -\eta) i} \right)
	\right\}^{-\alpha / \theta } 
	e^{-\lambda i}
	\nonumber
	\\
	&\le &
	C' e ^{(- \alpha \delta /\theta - \lambda )i}  , 
	\label{eq:PartIV}
\end{eqnarray}
because $1 - e ^{-(\delta - \eta)i} \uparrow 1$ and 
$a(i)$ is monotone increasing. 

Combining \eqref{eq:PartI}, \eqref{eq:PartIII} and \eqref{eq:PartIV}, 
we are able to choose $I_0 \ge 1$ ($I_0= T_3 (\eta)$) such that, 
for $i \ge I_0$, 
\begin{eqnarray*}
	\prob_x (A_i)
	&\le & 
	e^{(\lambda + \alpha \delta - \varepsilon ) i }
	\left( \text{(I)} + \text{(III)} + \text{(IV)} \right)
	\\ 
	&\le &
	e^{(\lambda + \alpha \delta - \varepsilon ) i }
	\left\{ 
		e^{(-\lambda -\alpha \eta) (i+1)} 
		+
		c e^{(-\lambda -\alpha \eta) i}
		+ 
		C e ^{(- \alpha \delta /\theta - \lambda )i} 
	\right\} 
	\\
	&=&
	e^{(\lambda + \alpha \delta - \varepsilon ) i }
	\left\{ 
		c' e^{(-\lambda -\alpha \eta) i}
		+ 
		C e ^{(- \alpha \delta /\theta - \lambda )i} 
	\right\}
	\\
	&=& 
	e ^{\{ \alpha (\delta - \eta) - \varepsilon\} i}
	\left[
		c' + C e^{\alpha (\eta - \delta / \theta) i }
	\right] .
\end{eqnarray*} 
For fixed any $\alpha , \delta$ and $\varepsilon$, 
we choose $\theta > 1$ and $\eta < \delta $ such that
\[ 
	\alpha (\delta - \eta ) - \varepsilon< 0 ,
	\qquad 
	\alpha \left(\eta - \frac{\delta}{\theta} \right) 
	< 0 . 
\] 
For each $\varepsilon > 0$, 
we prove the existence of $c_0, I_0$ which satisfy \eqref{eq:3-26}. 
\end{proof}

\begin{lemma}\label{lem:liminf}
For $\delta > 0$, 
we set 
$\kappa _\delta (t) = e^{\delta t} a(t)$,
where $a(t) > 0$ is monotone increasing 
and $t^{-1} \log a(t) \to 0$, $t \to \infty$. 
If $\delta < - \lambda / \alpha$, 
then for any $x \in \rd $, 
\[
	\liminf _{t \to + \infty} 
	\frac{1}{t} \log N_t ^{\kappa _\delta (t)}
	\ge 
	- \lambda - \alpha \delta , 
	\qquad 
	\prob _x (\cdot \mid M_\infty > 0) \text{-a.s.}
\]
\end{lemma}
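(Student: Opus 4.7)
The strategy follows \cite[Lemma 3.9]{S18} with modifications that account for the heavy tails of the $\alpha$-stable motion. The three main ingredients are: a branching decomposition at a large intermediate time, a first-moment computation via the Many-to-One lemma together with \eqref{eq:3-1}, and a conditional second-moment (Paley--Zygmund) argument upgraded to an a.s.\ statement by the independence of the subtrees and a Borel--Cantelli step along integer times.

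First I would fix $\delta \in (0, -\lambda/\alpha)$ and $\varepsilon > 0$ and reduce to showing that, for all sufficiently large integers $n$,
$$N_n^{\kappa _\delta(n)} \ge e^{(-\lambda - \alpha \delta - \varepsilon) n},$$
$\prob _x(\cdot \mid M_\infty >0)$-a.s.; the extension to continuous $t$ is then routine, because $\kappa _\delta(t)$ is monotone and $p_0 \equiv 0$ (so $N_t^{\kappa}$ is non-decreasing in $t$, cf. the end of the proof of Lemma \ref{lem:3-7}). For an intermediate time $s_n < n$ (a large constant, or a small fraction of $n$), the branching property gives
$$N_n^{\kappa _\delta(n)} = \sum _{u \in Z_{s_n}} \tilde N^{(u)},$$
where $\tilde N^{(u)}$ counts the descendants of $u$ lying outside $B(\kappa _\delta(n))$ at time $n$, and $\{\tilde N^{(u)}\}_{u \in Z_{s_n}}$ is conditionally independent given $\calG_{s_n}$. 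By Theorem \ref{manytoone} and the two-sided asymptotic \eqref{eq:3-1},
$$\Ex \bigl[ \tilde N^{(u)} \mid \calG_{s_n} \bigr] = E_{\bfX_{s_n}^u} \bigl[ e^{A_{n-s_n}^\nu} \, ; \, |X_{n-s_n}| \ge \kappa _\delta(n) \bigr] \asymp \kappa _\delta(n)^{-\alpha} e^{-\lambda(n-s_n)},$$
provided $|\bfX_{s_n}^u|$ is small compared with $\kappa _\delta(n)$. Since $M_{s_n} = e^{\lambda s_n} Z_{s_n}(h) \to M_\infty > 0$ and $h$ is bounded and strictly positive, $|Z_{s_n}|$ grows at least like $e^{-\lambda s_n} M_\infty / \|h\|_\infty$ on the event $\{M_\infty > 0\}$, so the total conditional first moment is of the right order $e^{(-\lambda - \alpha \delta) n}$.

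To convert this into a pathwise lower bound I would estimate the conditional second moment of each $\tilde N^{(u)}$, where Assumption \ref{ass1}(ii) that $R\mu \in \calK$ is used to control the variance contributed at the splitting times; a conditional Paley--Zygmund inequality then yields a positive lower bound on the probability that each $\tilde N^{(u)}$ attains its conditional mean-order value. The conditional independence of the $\tilde N^{(u)}$, combined with the exponential growth of $|Z_{s_n}|$, then produces the required a.s.\ lower bound via a Borel--Cantelli argument summed over $n$.

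The principal obstacle, and where the $\alpha$-stable analysis diverges from the Brownian one, is securing the first- and second-moment asymptotics uniformly over starting points $y = \bfX_{s_n}^u$ whose magnitude can, by Theorem \ref{theo:main}(i), grow on the scale $\kappa _{-\lambda/\alpha}(s_n)$. The needed uniformity should be extracted from the tail estimate \eqref{eq:shio3} in exactly the spirit of Lemma \ref{lem:tailmax}, using the separation $\kappa _\delta(n) - |y| \to \infty$ (which holds because $\delta < -\lambda/\alpha$ and $s_n$ is taken small enough relative to $n$); this is the heavy-tailed ingredient that replaces the Gaussian concentration available in \cite{S18}. Once this uniform tail control is in place, the remainder of the argument is structural and mirrors the Brownian case.
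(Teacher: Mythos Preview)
Your overall skeleton (branching decomposition at an intermediate time, conditional independence of subtrees, Borel--Cantelli along integers) matches the paper's, but there is a genuine gap and a substantive methodological difference.

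\medskip
\textbf{The gap: continuous-time extension.} Your claim that ``$N_t^{\kappa}$ is non-decreasing in $t$'' is false. In Lemma~\ref{lem:3-7} it is the \emph{total} particle count $N_t$ that is monotone (because $p_0\equiv 0$); the count $N_t^{\kappa_\delta(t)}$ of particles outside $B(\kappa_\delta(t))$ is not, since particles can jump back inside and, moreover, $\kappa_\delta(t)$ itself increases. So a lower bound at integer times $n$ does not transfer to $t\in[n,n+1]$ in the way you suggest. The paper handles this by working not with $\{|\bfX_n^{(u)}|\ge\kappa_\delta(n)\}$ but with carefully designed events
\[
E_n^u=\Bigl\{|\bfX_n^{(np,u)}|>|\bfX_n^{(np,u)}-\bfX_{np}^{(np,u)}|>2\kappa_\delta(n+1),\ \sup_{n\le s\le n+1}|\bfX_s^{(np,u)}-\bfX_n^{(np,u)}|<\kappa_\delta(n)\Bigr\},
\]
which force $|\bfX_s^{(np,u)}|>\kappa_\delta(s)$ for \emph{all} $s\in[n,n+1]$ (see \eqref{eq:3-29}). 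This is the missing idea in your outline, and it is not ``routine''.

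\medskip
\textbf{The different concentration step.} The paper does \emph{not} use a second-moment / Paley--Zygmund argument. Instead it applies Chebyshev to $\exp\bigl(-\sum_{u}\indi_{E_n^u}\bigr)$, which factorises over $u\in Z_{np}([0,\kappa_n])$ by conditional independence, yielding a product of terms each bounded by $1-(1-e^{-1})\prob_{\bfX_{np}^u}(F_n^u)$. A direct lower bound $\prob_w(F_n^u)\ge C_n\,\kappa_\delta(n+1)^{-\alpha}n$, obtained purely from the heavy-tail estimate \eqref{eq:shio3} for the one-particle motion, then gives an exponentially small failure probability with no appeal to second moments of the branching process (so Assumption~\ref{ass1}(ii) is not used here). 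Your Paley--Zygmund route might be made to work, but you would need uniform control of $\Ex[(\tilde N^{(u)})^2]/(\Ex[\tilde N^{(u)}])^2$ over starting points $\bfX_{s_n}^u$ and over $n$, which is a nontrivial additional computation; the paper's exponential-moment approach sidesteps this entirely.

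\medskip
\textbf{Uniformity in the starting point.} The paper does address your ``principal obstacle'', but differently: it restricts to particles $u\in Z_{np}([0,\kappa_n])$ with $\kappa_n=e^{\eta_1 n}a(n)$ for a small $\eta_1<\delta$, and separately shows via \eqref{eq:3-1} and Chebyshev that $\prob_x(N_{np}^{\kappa_n}>\gamma_n)$ is summable. The choice of the intermediate-time fraction $p\in(0,1)$ and the auxiliary exponents $\eta_1,\eta_2$ is then made to satisfy a system of inequalities (see \eqref{condition3}); this calibration is the technical heart of the proof and is absent from your sketch.
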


We set 
\[
	Z_t ([0, \kappa])
	= 
	\left\{
		u \in Z_t \; : \; 
		\left| \bfX _t ^u \right| \in [0, \kappa ]
	\right\}, 
	\qquad \kappa > 0 
\]
and $N_t ([0, \kappa] )$ for the cardinal number of 
$Z_t ([0, \kappa])$.

\begin{proof}
Let us denote by $\{\bfX _s ^{(t,u)} \} _{s \ge t}$ 
the one-particle trajectory which is rooted from $u \in Z_t$. 
The path of $\{\bfX _s ^{(t,u)} \} _{s \ge t}$ is obtained as follows. 
When particle $u$ splits into some particles, 
we choose $v$ from these particles. 
Then, we attach the trajectory of $v$ to one of $u$. 
By repeating this procedure, $\{\bfX _s ^{(t,u)} \} _{s \ge t}$ is constructed. 
This process is regarded as a symmetric $\alpha$-stable process.

We fix $\varepsilon > 0$ and $p \in (0,1)$. 
Then, we shall show that there exist $C >0$ and $N \in \n$ such that 
\begin{equation}
	\prob _x \left(
		\sum _{u \in Z_{n p}} \indi _{E_n ^u} 
		\le 
		e^{(-\lambda - \alpha \delta - \varepsilon) n} , 
		\ 
		N_{n p} \ge e^{-\lambda p^2 n}
	\right) 
	\le 
	e^{-C n}, 
	\qquad n \ge N ,
	\label{eq:20230306-4}
\end{equation}
where, for each $u \in Z_{np}$, the event $E_n^u$ is defined by 
\[
	E_n ^u 
	:= 
	\left\{ 
		\left| \bfX _n ^{(n p , u)} \right| 
		> 
		\left|
			\bfX _n ^{(n p , u)} - \bfX _{n p} ^{(n p , u)} 
		\right| 
		> 
		2 \kappa _\delta (n+1) , \ 
		\sup _{n \le s \le n+1} 
		\left|
			\bfX _s ^{(n p , u)} - \bfX _n ^{(n p , u)}
		\right| 
		< \kappa _\delta (n)
	\right\} .
\] 
From 
$
	\sum _{u \in Z_{np}([0, \kappa])} \indi _{E_n^u} 
	\le 
	\sum _{u \in Z_{np}} \indi _{E_n^u}
$ 
and 
$N_{np} = N_{np} ([0,\kappa]) + N_{np} ^\kappa$ for any $\kappa > 0$, 
we see that for any $\kappa , \gamma >0$, 
the left-hand side of \eqref{eq:20230306-4} is 
\begin{eqnarray*}
	&& 
	\prob _x \left(
		\sum _{u \in Z_{np} ([0, \kappa]) } \indi _{E_n^u} 
		\le 
		e^{(-\lambda - \alpha \delta - \varepsilon) n} , 
		\ 
		N_{n p} ([0, \kappa]) 
		\ge 
		e^{-\lambda p^2 n} - N _{np} ^{\kappa} , 
		\ 
		N_{np} ^{\kappa} \le \gamma
	\right) 
	\\
	&&
	+
	\prob _x \left(
		\sum _{u \in Z_{np} ([0, \kappa]) } \indi _{E_n^u} 
		\le 
		e^{(-\lambda - \alpha \delta - \varepsilon) n} , 
		\ 
		N_{n p} ([0, \kappa]) 
		\ge 
		e^{-\lambda p^2 n} - N _{np} ^{\kappa} , 
		\ 
		N_{np} ^{\kappa} > \gamma
	\right) 
	\\
	&\le &
	\prob _x \left(
		\sum _{u \in Z_{np} ([0, \kappa]) } \indi _{E_n^u} 
		\le 
		e^{(-\lambda - \alpha \delta - \varepsilon) n} , 
		\ 
		N_{n p} ([0, \kappa]) 
		\ge 
		e^{-\lambda p^2 n} - \gamma
	\right) 
	+
	\prob _x \left(
		N_{np} ^{\kappa} > \gamma
	\right) . 
\end{eqnarray*}
Instead of $\kappa$ and $\gamma$, 
we set $\kappa _n = e ^{\eta _1 n} a(n)$ and $\gamma _n = e ^{\eta _2 n} a(n)$ 
for $0<\eta _2 < \eta _1 < \delta$. 
Thus, it is sufficient for \eqref{eq:20230306-4} 
to show that 
there exist positive constants $c_i$ ($i=1,2$) 
and $N \in \n$ such that, for all $n \ge N$,   
\begin{equation}
	\begin{aligned}
	\text{(I)} &:= 
	\prob _x \left(
		\sum _{u \in Z_{np} ([0, \kappa_n]) } \indi _{E_n^u} 
		\le 
		e^{(-\lambda - \alpha \delta - \varepsilon) n} , 
		\ 
		N_{n p} ([0, \kappa_n]) 
		\ge 
		e^{-\lambda p^2 n} - \gamma _n
	\right) 
	\le 
	e^{-c_1 n} , \\
	\text{(II)} &:= 
	\prob _x \left(
		N_{np} ^{\kappa_n} > \gamma _n
	\right) 
	\le e^{-c_2 n} .
	\end{aligned}
	\label{eq:aimII}
\end{equation} 
Our task is to give appropriate $p \in (0,1)$ and $\eta _i$ ($i=1,2$), 
which ensure the existence of $c_i$ and $N$ for \eqref{eq:aimII}. 
We determine the requirements from (I) and (II). 
As a result, such conditions will appear in \eqref{condition3} below. 

We firstly consider a condition for (II). 
By Chebyshev's inequality and \eqref{eq:3-1}, 
there exist $c>0$ and $N_1 \in \n$ such that for $n \ge N_1$,   
\begin{eqnarray}
	\text{(II)}
	&\le&  
	\left( \gamma_n \right) ^{-1} 
	\Ex _x \left[ N_{np} ^{\kappa _n} \right]
	\le 
	c \left( \gamma _n \right) ^{-1} 
	\left( \kappa _n \right)^{-\alpha} 
	e^{-\lambda np} 
	\nonumber
	\\
	&= &
	\dfrac{c}{a(n)^{1+\alpha}} 
	e^{(-\alpha \eta _1 - \eta _2 - \lambda p)n}
	\le 
	e^{(-\alpha \eta _1 - \eta _2 - \lambda p)n} .
	\label{condition1}
\end{eqnarray}

We secondly consider a condition for (I). 
By Chebyshev's inequality, 
\begin{eqnarray}
	&&
	\text{(I)}
	=
	\prob _x \left(
		\sum _{u \in Z_{n p} ([0, \kappa _n]) } 
		\indi _{E_n ^u} 
		\le 
		e^{(-\lambda - \alpha \delta - \varepsilon) n} , 
		\ 
		N_{n p} ( [0, \kappa _n] ) 
		\ge 
		e^{-\lambda p^2 n} - \gamma _n
	\right)
	\nonumber
	\\ 
	&= & 
	\prob _x \left(
		\exp \left(
			- \sum _{u \in Z_{n p} ( [0, \kappa _n] ) } 
			\indi _{E_n ^u} 
		\right) 
		\ge 
		\exp \left\{ - e^{(-\lambda - \alpha \delta - \varepsilon) n} \right\} , 
		\ 
		N_{n p} ([0, \kappa_n]) 
		\ge 
		e^{-\lambda p^2 n} - \gamma _n
	\right)
	\nonumber 
	\\ 
	&\le & 
	\exp\left\{ e^{(-\lambda - \alpha \delta - \varepsilon) n} \right\}
	\Ex _x \left[
		\exp \left(
			- \sum _{u \in Z_{n p} ( [0, \kappa_n] )}
			\indi _{E_n ^u} 
		\right)
		\; ; \;  
		N_{n p} ( [0, \kappa_n] )
		\ge 
		e^{-\lambda p^2 n} - \gamma _n
	\right] 
	\nonumber
	\\
	&=&
	\exp\left\{ e^{(- \lambda - \alpha \delta - \varepsilon) n} \right\}
	\Ex _x \left[
		\prod _{u \in Z_{np} ( [0, \kappa_n] ) }
		e^{-\indi _{E_n ^u}} 
		\; ; \;  
		N_{n p} ( [0, \kappa_n] ) 
		\ge 
		e^{-\lambda p^2 n} - \gamma _n
	\right] .
	\label{eq:20230216-1}
\end{eqnarray} 
We write 
$$ 
	F_n ^u 
	=
	\left\{ 
		\left| \bfX _{n-np} ^{(0 , u)} \right| 
		> 
		\left|
			\bfX _{n-np} ^{(0 , u)} - \bfX _0 ^{(0 , u)} 
		\right| 
		> 
		2 \kappa _\delta (n+1) , \, 
	\sup _{n - n p \le s \le n+1 - n p} 
		\left|
			\bfX _s ^{(0,u)} - \bfX _{n - n p} ^{(0 , u)}
		\right| 
		< \kappa _\delta (n)
	\right\} .
$$ 
Since 
$\{ \{ \bfX ^{(np,u)} _s \}_{s \ge np}, u \in Z_{np} \}$ 
are mutually independent under the law $\prob _x (\cdot \mid \calF _{np})$, 
the second factor of \eqref{eq:20230216-1} is equal to 
\begin{eqnarray}
	&&
	\Ex _x \left[
	\Ex _x \left[ \left. 
		\prod _{u \in Z_{np} ( [0, \kappa_n] )} 
		e^{-\indi _{E_n ^u}} 
		\; \right| \; \calF _{np}
	\right] 
		\; ; \;  
		N_{n p} ( [0, \kappa_n] ) 
		\ge 
		e^{-\lambda p^2 n} - \gamma _n
	\right] 
	\nonumber 
	\\
	&=& 
	\Ex _x \left[
	\Ex _{\bfX _{np}} \left[ 
		\prod _{u \in Z_0 ( [0, \kappa_n] )} 
		e^{-\indi _{F_n ^u}} 
		\right] 
		\; ; \;   
		N_{n p} ( [0, \kappa_n] ) 
		\ge 
		e^{-\lambda p^2 n} - \gamma _n
	\right]
	\nonumber
	\\
	&=&
	\Ex _x \left[ 
		\prod _{u \in Z_{np} ( [0, \kappa _n] )}
		\Ex _{\bfX_{np} ^u} \left[  
			e^{-\indi _{F_n ^u}} 
		\right] 
		\; ; \;   
		N_{n p} ( [0, \kappa _n] )
		\ge 
		e^{-\lambda p^2 n} - \gamma _n 
	\right] . 
	\label{eq:20230218-3}
\end{eqnarray}
Here, for any $u \in Z_{np} ( [0, \kappa_n] )$,  
\begin{equation}
	\Ex _{{\bf X}_{np} ^u} \left[  
		e^{-\indi _{F_n ^u}} 
	\right] 
	= 
	1 - (1-e^{-1}) \prob _{{\bf X}_{np} ^u} (F_n^u).	
	\label{eq:20230216-2} 
\end{equation} 
We give the lower estimate of $\prob_{{\bf X}_{np} ^u} (F_n^u)$ 
for $u \in Z_{np} ( [0, \kappa _n] )$, 
which implies the upper estimate of \eqref{eq:20230218-3}. 
Since $\{ \bfX_s ^{(0,u)} \}_{s \ge 0}$ 
is identified with the stable process, 
for $|w| \le \kappa _n$, 
\begin{eqnarray}
	\prob _w (F_n^u)
	&=&
	P_w 
	\bigg( 
		\left| X _{n-np} \right| 
		> 
		\left|
			X _{n-np} - X _0 
		\right| 
		> 
		2 \kappa _\delta (n+1), 
	\nonumber 
	\\ 
	&& \quad 
		\sup _{n - n p \le s \le n+1 - n p} 
		\left|
			X _s - X _{n - n p}
		\right| 
		< \kappa _\delta (n)
	\bigg) 
	\nonumber 
	\\
	&=& 
	P_w
	\left( 
		\left| X _{n-np} \right| 
		> 
		\left| X _{n-np} - w \right| 
		> 
		2 \kappa _\delta (n+1)
	\right) 
	\nonumber 
	\\ 
	&& \times 
	P_w 
	\left(
	\sup _{n - n p \le s \le n+1 - n p} 
		\left| X _s  - X _{n - n p} \right| 
		< 
		\kappa _\delta (n)
	\right) . 
	\label{eq:20230410-1}
\end{eqnarray}
We fix $\eta \in (\eta _1 , \delta )$ and 
set $\kappa_\eta (n) = e^{\eta n} a(n)$. 
Then,  
\begin{eqnarray}
	&&
	P_w 
	\left(
	\sup _{n - n p \le s \le n+1 - n p} 
		\left| X _s  - X _{n - n p} \right| 
		< 
		\kappa _\delta (n)
	\right) 
	\nonumber 
	\\
	&\ge &
	P_w
	\left(
	\sup _{n - n p \le s \le n+1 - n p} 
		\left| X _s  - X _{n - n p} \right| 
		< 
		\kappa _\delta (n) , \; 
		\left| X _{n - n p} \right| < \kappa _\eta (n)
	\right) 
	\nonumber 
	\\ 
	&=& 
	E_w \left[  
		P_{X_{n-np}} \left(
			\sup _{0 \le s \le 1} 
			\left| X _s  - X _0 \right| 
			< 
			\kappa _\delta (n)
		\right) 
		\; ; \; 
		\left| X _{n - n p} \right| < \kappa _\eta (n)
	\right] 
	\nonumber 
	\\
	&=& 
	P_w \left(
		\left| X _{n-np} \right| 
		< 
		\kappa _\eta (n)
	\right) 	
	- 
	E_w \left[  
		P_{X_{n-np}} \left(
			\sup _{0 \le s \le 1} 
			\left| X _s  - X _0 \right| 
			\ge 
			\kappa _\delta (n)
		\right) 
		\; ; \; 
		\left| X _{n - n p} \right| < \kappa _\eta (n)
	\right]. 
	\nonumber \\
	\label{eq:20230425-A}
\end{eqnarray}
On the second term of \eqref{eq:20230425-A}, 
from 
$$
	\left\{
		\sup _{0 \le s \le 1} \left| X_s - X_0 \right| 
			\ge \kappa _\delta (n) 
	\right\}
	\subset 
	\left\{
		\sup _{0 \le s \le 1} |X_s |  
			\ge \kappa _\delta (n) - |X_0|
	\right\}
$$ and Lemma \ref{lem:runningmax}, 
we see that 
\[
\begin{split}
	P_y \left(
		\sup _{0 \le s \le 1} 
		\left| X _s  - X _0 \right| 
		\ge 
		\kappa _\delta (n)
	\right) 
	\le 
	P_y \left(
		\sup _{0 \le s \le 1} |X_s |  
			\ge \kappa _\delta (n) - |y|
	\right) 
	\\ 
	\le 
	2 P_y \left(
		|X_1| \ge \kappa _\delta (n) - |y|
	\right) 
	\le 
	2 \omega _d 
	\int ^\infty _{\kappa _\delta (n) - 2 \kappa _\eta (n)}
		g (r) r^{d-1} d r ,
\end{split}
\] 
for all $|y| < \kappa _\eta (n)$. 
Here, we assume $n$ so large that $\kappa _\delta (n) > 2 \kappa _\eta (n)$. 
Thus, 
\begin{eqnarray*}
	&& 
	E_w \left[  
		P_{X_{n-np}} \left(
			\sup _{0 \le s \le 1} 
			\left| X _s  - X _0 \right| 
			\ge 
			\kappa _\delta (n)
		\right) 
		\; ; \; 
		\left| X _{n - n p} \right| < \kappa _\eta (n)
	\right] 
	\\
	&\le &
	2 \omega _d 
	\left( 
		\int ^\infty _{\kappa _\delta (n) - 2 \kappa _\eta (n)}
		g (r) r^{d-1} d r
	\right) 
	P_w \left(
		\left| X _{n - n p} \right| < \kappa _\eta (n)
	\right)	
\end{eqnarray*}
When $|w| \le \kappa _n$, by \eqref{eq:20230206-2}, 
\begin{eqnarray*}
	&&
	P_w ( |X _{n-np}| \le \kappa _\eta (n) ) 
	= 
	P \left( 
		\left| (n-np)^{1/\alpha} X_1 + w \right| 
		\le 
		\kappa _\eta (n) 
	\right) 
	\\ 
	&\ge &
	P \left( 
		\left| (n-np) ^{1/\alpha} X_1 \right| + \left| w \right| 
		\le 
		\kappa _\eta (n) 
	\right) 
	\ge 
	P \left( 
		(n-np)^{1/\alpha} \left| X_1 \right| 
		\le
		\kappa _\eta (n) - \kappa_n
	\right) .
\end{eqnarray*}
Therefore, by \eqref{eq:20230425-A}, 
\begin{eqnarray}
	&& 
	P_w 
	\left(
	\sup _{n - n p \le s \le n+1 - n p} 
		\left| X _s  - X _{n - n p} \right| 
		< 
		\kappa _\delta (n)
	\right)
	\nonumber 
	\\
	&\ge & 
	P_w \left(
		\left| X _{n - n p} \right| < \kappa _\eta (n)
	\right)	
	-
	2 \omega _d 
	\left( 
		\int ^\infty _{\kappa _\delta (n) - 2 \kappa _\eta (n)}
		g (r) r^{d-1} 
	\, d r
	\right) 
	P_w \left(
		\left| X _{n - n p} \right| < \kappa _\eta (n)
	\right)	
	\nonumber 
	\\
	&=& 
	P_w \left(
		\left| X _{n - n p} \right| < \kappa _\eta (n)
	\right)	
	\left(
		1 - 
		2 \omega _d 
		\int ^\infty _{\kappa _\delta (n) - 2 \kappa _\eta (n)}
		g (r) r^{d-1} 
		\, d r		
	\right) 
	\nonumber
	\\ 
	&\ge &
	P \left( 
		\left| X_1 \right| 
		\le 
		\left( \kappa _\eta (n)- \kappa _n^1 \right) 
		(n-np)^{-1/\alpha}
	\right) 
	\left(
		1 - 
		2 \omega _d 
		\int ^\infty _{\kappa _\delta (n) - 2 \kappa _\eta (n)}
		g (r) r^{d-1}
		\, d r
	\right) =:C_n .
	\nonumber 
	\\ 
	\label{eq:20230502-3} 
\end{eqnarray}
Here, $\kappa_\delta (n) - 2 \kappa_\eta (n) \to \infty$, in addition, 
for $\eta \in (\eta _1 , \delta)$, 
\[
	(\kappa _\eta (n)- \kappa _n) n^{-1/\alpha} 
	= 
	\left(e^{\eta n} - e^{\eta _1 n} \right) a(n) n^{-1/\alpha} 
	\to \infty.  
\]
Hence, $C_n \uparrow 1$ as $n \to \infty$.  
Then, the first factor of \eqref{eq:20230410-1} is equal to
\begin{eqnarray}
	&& 
	P_w 
	\left( 
		\left| X _{n-np} \right| 
		> 
		\left| X _{n-np} - x \right| 
		> 
		2 \kappa _\delta (n+1)
	\right) 
	\nonumber
	\\ 
	&=&
	(n-np)^{-d/\alpha}
	\int _{|y| > |y-w|> 2 \kappa _\delta (n+1) }
	g \left( \dfrac{|y-w|}{(n-np)^{1/\alpha}} \right) 
	\, d y
	\nonumber
	\\
	&=&
	(n-np)^{-d/\alpha}
	\int _{|v+w| > |v| > 2 \kappa _\delta (n+1) }
	g \left( \dfrac{|v|}{(n-np)^{1/\alpha}} \right)
	\, d v	.
	\label{eq:20230216-4}
\end{eqnarray} 
Since $|w+v| > |v| \Leftrightarrow |v|^2 + 2 \langle v,w \rangle >0$, 
$
	\left\{ v \in \rd : \langle v,w \rangle > 0 \right\}
	\subset 
	\left\{ v \in \rd : |w+v| > |v| \right\}
$. 
Thus,
\begin{eqnarray*}
	\eqref{eq:20230216-4}
	&\ge &
	(n-np)^{-d/\alpha}
	\int _{|v|> 2 \kappa _\delta (n+1) , \langle v,w \rangle >0}
	g \left( \dfrac{|v|}{(n-np)^{1/\alpha}} \right) 
	\, d v
	\\
	&=&
	\dfrac{\omega_d}{2}
	(n-np)^{-d/\alpha}
	\int ^\infty _{2 \kappa _\delta (n+1) }
	g \left( \dfrac{r}{(n-np)^{1/\alpha}} \right) 
	r^{d-1} 
	\, d r 
	\\
	&=& 
	\dfrac{\omega_d}{2}
	\int ^{\infty} _{ 2 \kappa _\delta (n+1) (n-np)^{-1/\alpha}}
	g \left( u \right) 
	u^{d-1} 
	\, d u . 
\end{eqnarray*}
Hence, we have that for any $w \in \rd$ and $n \ge 1$, 
\begin{equation}
	P_w 
	\left( 
		\left| X _{n-np} \right| 
		> 
		\left| X _{n-np} - w \right| 
		> 
		2 \kappa _\delta (n+1) 
	\right) 
	\ge 
	\dfrac{\omega_d}{2}
	\int ^{\infty} _{ 2 \kappa _\delta (n+1) (n-np)^{-1/\alpha}}
	g \left( u \right) 
	u^{d-1} 
	\, d u 
	\label{eq:20230306-2}
\end{equation} 
and we see that the right-hand side is independent of $w$. 
Since $\kappa _\delta (n+1) n^{-1/\alpha} \to \infty$, 
integrand $g (u) \asymp u ^{-(\alpha + d)}$, and which implies that  
\begin{equation}
	\begin{aligned}
	\int ^{\infty} _{ 2 \kappa _\delta (n+1) (n-np)^{-1/\alpha}}
	g \left( u \right) 
	u^{d-1} 
	\, d u	
	& \asymp 
	\int ^{\infty} _{ 2 \kappa _\delta (n+1) (n-np)^{-1/\alpha}}
	u^{-(\alpha +d)}
	u^{d-1} 
	\, d u	
	\\ 
	& =
	\frac{(1-p) 2^{-\alpha}}{\alpha} 
	\kappa _\delta (n+1) ^{-\alpha} n .
	\end{aligned}
	\label{eq:20230218-1}
\end{equation}
By \eqref{eq:20230306-2} and \eqref{eq:20230218-1}, 
there exist $N _2 \ge 1$ ($N _2 \ge N_1$) and $C>0$ such that, 
for all $ n \ge N _2$ and $\ w \in \rd$,  
\begin{equation}
	P_w 
	\left( 
		\left| X _{n-np} \right| 
		> 
		\left| X _{n-np} - w \right| 
		> 
		2 \kappa _\delta (n)
	\right) 
	\ge
	C \kappa _\delta (n+1) ^{-\alpha} n. 
	\label{eq:20230218-2}
\end{equation}
By \eqref{eq:20230410-1}, \eqref{eq:20230502-3} 
and \eqref{eq:20230218-2}, 
\[ 
	\prob _w (F_n ^u)
	\ge 
	C_n \kappa _\delta (n+1) ^{-\alpha} n , \quad 
	\text{on } |w| \le \kappa _n , \quad 
	\text{for all } n \ge N_2,  
\]
where $C_n$ is independent of $w$ and $C_n \uparrow C$. 
We apply the above inequality to \eqref{eq:20230216-2}, 
then for all $n \ge N_2$ and $u \in Z_{np} ([0, \kappa _n])$, 
\begin{equation} 
	\Ex _{{\bf X}_{np} ^u} \left[  
		e^{-\indi _{F_n ^u}} 
	\right] 
	=  
	1 - (1-e^{-1}) \prob _{{\bf X}_{np} ^u} (F_n^u) 
	\le 
	1 - c \kappa _\delta (n+1)^{-\alpha} n . 
	\label{eq:20230218-4}
\end{equation}
By substituting \eqref{eq:20230218-4} into \eqref{eq:20230218-3}, 
\begin{eqnarray}
	&& 
	\Ex _x \left[
	\prod _{u \in Z_{np} ([0,\kappa_n])} 
	\Ex _{\bfX _{np}^u} 
	\left[ 
		 e^{-\indi _{F_n ^u}} 
	\right] 
		\; ; \;  
		N_{n p} ([0,\kappa _n]) 
		\ge 
		e^{-\lambda p^2 n} - \gamma _n
	\right] 
	\nonumber 
	\\ 
	&\le &
	\Ex _x \left[
	\left(
		1 - c \kappa _\delta (n+1)^{-\alpha} n	
	\right)^{e^{-\lambda p^2 n}- \gamma _n}
	\; ; \;   
	N_{n p} ([0,\kappa _n]) 
	\ge 
	e^{-\lambda p^2 n} - \gamma _n
	\right] 
	\nonumber
	\\
	&=& 
	\left(
		1 - c \kappa _\delta (n+1)^{-\alpha} n
	\right) ^{e^{-\lambda p^2 n} - \gamma _n}
	\prob _x \left(  
		N_{n p} ( [0,\kappa _n] ) 
		\ge 
		e^{-\lambda p^2 n} - \gamma _n
	\right)	
	\nonumber 
	\\
	&\le &
	\left(
		e^{- c \kappa _\delta (n+1)^{-\alpha} n}
	\right)^{e^{-\lambda p^2 n}- \gamma _n} .
	\label{eq:20230306-1}
\end{eqnarray}
Here, inequality \eqref{eq:20230306-1} is caused by $1 - \kappa \le e^{-\kappa}$, 
$\kappa \in \R$ and $\prob_x ( \cdot) \le 1$. 
Applying \eqref{eq:20230306-1} to \eqref{eq:20230216-1}, 
we have that, for all $n \ge N_2$, 
\begin{eqnarray*}
	\text{(I)}
	&\le &
	\exp\left\{ e^{(-\lambda - \alpha \delta - \varepsilon) n} \right\}
	\left(
		e^{- c \kappa _\delta (n+1)^{-\alpha} n}
	\right)^{e^{-\lambda p^2 n}- \gamma _n}
	\\
	&= &
	\exp \left\{ 
		e^{(-\lambda - \alpha \delta - \varepsilon) n} 
		-
		c n \kappa _\delta (n+1)^{-\alpha}
		\left(e^{-\lambda p^2 n}- \gamma _n \right)
	\right\} . 
\end{eqnarray*}
The above exponential part is equal to 
\begin{eqnarray}
	&& 
	e^{(-\lambda -\alpha \delta - \varepsilon) n} 
	- 
	c n \left\{ e^{\delta (n+1)} a(n+1) \right\} ^{-\alpha} 
	\left( e^{-\lambda p^2 n} - e^{\eta _2n} a(n) \right) 
	\nonumber
	\\
	&=& 
	-c' n 
	\dfrac{e^{(-\alpha \delta - \lambda p^2 )n}}
		{a(n+1)^\alpha} 
	\left\{ 
		1 - e^{(\lambda p^2 + \eta _2)n} a(n)
	\right\} 
	\left[
		1 - 
		\dfrac{a(n+1)^\alpha 
			e^{(\lambda p^2 - \lambda - \varepsilon )n}}
		{c' n \{ 1 - e^{(\lambda p^2 + \eta _2)n} \} a(n)}
	\right] .
	\label{condition2}
\end{eqnarray}

According to \eqref{condition1} and \eqref{condition2}, 
we take $p \in (0,1)$ and $\eta_i$ ($0 <\eta_2 < \eta _1 < \delta$) 
which fulfill \eqref{eq:aimII}, as follows: 
\[
	- \alpha \eta _1 - \eta _2 - \lambda p <0; \quad 
	-\alpha \delta - \lambda p^2 >0 ; \quad 
	\lambda p^2 + \eta _2 < 0; \quad 
	\lambda p^2 - \lambda - \varepsilon < 0 ,
\]
that is, 
\begin{eqnarray}
	\left( -\lambda -\varepsilon \right) 
	\vee 
	\alpha \delta
	\vee 
	\eta _2
	< 
	-\lambda p^2; 
	\qquad 
	- \lambda p 
	<
	\alpha \eta _1 + \eta _2 .
	\label{eq:20230602-2}
\end{eqnarray}
Here, $\alpha \delta <-\lambda$, 
by the assumption $\delta < \frac{-\lambda}{\alpha}$. 
Since we finally take $\varepsilon \downarrow 0$, 
it is possible to assume that $\varepsilon$ is so small that 
$\varepsilon < \alpha \delta$ and 
$\alpha \delta < - \lambda - \varepsilon$. 
We firstly fix such $\varepsilon$. 
Then, we take $\eta_2>0$ such that 
\[
	- \lambda \sqrt{1 - \frac{\varepsilon}{-\lambda}} 
	- \alpha \delta 
	< \eta _2 < 
	-\lambda - \alpha \delta .
\]
For such given $\varepsilon$ and $\eta_2$, 
the condition \eqref{eq:20230602-2} reduces to 
\[
	1 - \frac{\varepsilon}{-\lambda} 
	< p^2; 
	\qquad 
	p< 
	\dfrac{\alpha \eta _1 + \eta _2}{-\lambda} 
	< \dfrac{\alpha \delta + \eta _2}{-\lambda} < 1 . 
\]
That is, 
\begin{equation}
	\sqrt{1 - \frac{\varepsilon}{-\lambda} }
	< p <
	\dfrac{\alpha \delta + \eta _2}{-\lambda} . 
	\label{condition3}
\end{equation}
Since $-\lambda \sqrt{1 - \varepsilon / (-\lambda)} - \alpha \delta < \eta _2$, 
we can choose $p \in (0,1)$ such as \eqref{condition3}. 
Under \eqref{condition3}, 
we see that \eqref{condition2} goes to $-\infty$ as $n \to \infty$,  
which implies the first part of \eqref{eq:aimII}. 
Additionally, 
\eqref{condition1} implies the second part of \eqref{eq:aimII}. 
Therefore, we have \eqref{eq:20230306-4}. 

By \eqref{eq:20230306-4} and the Borel-Cantelli lemma, 
\[
	\prob _x \left(
		\limsup _{n \to \infty} 
		\left\{
		\sum _{u \in Z_{n p}} \indi _{E_n ^u} 
		\le 
		e^{(-\lambda - \alpha \delta - \varepsilon) n} , 
		\ 
		N_{n p} \ge e^{-\lambda p^2 n}
		\right\}		
	\right) 
	= 0 . 
\] 
Here, we write $G_n$ for the component, 
then $\prob _x (\liminf _{n \to \infty} G_n^c) = 1$. 
We decompose $G_n^c$ as follows: 
\[
	G_n ^c 
	= 
	\left\{
		\sum _{u \in Z_{n p}} \indi _{E_n ^u} 
		> 
		e^{(-\lambda - \alpha \delta - \varepsilon) n} 
	\right\} 
	\cup 
	\left\{ 
		N_{n p} < e^{-\lambda p^2 n}
	\right\}		
	=: 
	G_n ^1 \cup G_n^2.
\]
By the subadditiviy, 
\[
	\prob_x \left( \left. 
		\liminf _{n \to \infty} G_n^c 
		\; \right| 
		M_\infty > 0
	\right)
	\le 
	\prob_x \left( \left. 
		\liminf _{n \to \infty} G_n^1 
		\; \right| 
		M_\infty > 0
	\right)	
	+
	\prob_x \left( \left. 
		\limsup _{n \to \infty} G_n^2 
		\; \right| 
		M_\infty > 0
	\right)
\]
Since 
$\prob _x (\liminf _{n \to \infty} G_n^c) = 1$, 
\begin{equation}
	1
	\le 
	\prob_x \left( \left. 
		\liminf _{n \to \infty} G_n^1 
		\; \right| 
		M_\infty > 0
	\right)	
	+
	\prob_x \left( \left. 
		\limsup _{n \to \infty} G_n^2 
		\; \right| 
		M_\infty > 0
	\right) .
	\label{eq:20230602-1}
\end{equation}

On the other hand, 
\begin{equation} 
	\left\{ \lim _{t \to \infty} \frac1t \log N_t = -\lambda \right\}
	\subset 
	\liminf_{n \to \infty} 
	\left\{ N_{np} \ge e^{-\lambda p^2 n} \right\} 
	=
	\left(
		\limsup _{n \to \infty} G_n ^2
	\right)^c .
	\label{eq:20230219-6}
\end{equation} 
We see from Lemma \ref{lem:3-7} and \eqref{eq:20230219-6} that
\[
	\prob _x \left( \left. 
		\left( 
			\liminf _{n \to \infty} G_n ^2
		\right) ^c
		\ \right| \ 
		M_\infty > 0
	\right)
	= 1 ,
\]
it follows that   
$$ 
	\prob _x \left( \left. 
		\limsup _{n \to \infty} G_n^2
		\ \right| 
		\ M_\infty > 0
	\right)
	= 
	\prob _x \left( \left. 
		\left( 
			\liminf _{n \to \infty} 
			\left\{ N_{np} \ge e^{-\lambda p^2 n} \right\}
		\right) ^c
		\ \right| \ 
		M_\infty > 0
	\right)
	= 0. 
$$
By the above and \eqref{eq:20230602-1}, 
we have 
$
	\prob_x \left( \left. 
		\liminf _{n \to \infty} G_n^1 
		\; \right| 
		M_\infty > 0
	\right)	= 1
$, namely, 
\begin{equation} 
	\prob _x \left( \left. 
		\liminf _{n \to \infty} 
		\left\{
		\sum _{u \in Z_{n p}} \indi _{E_n ^u} 
		> 
		e^{(-\lambda - \alpha \delta - \varepsilon) n} 
		\right\}
		\; \right| \; 
		M_\infty > 0
	\right)
	= 1 .
	\label{eq:20230306-6}
\end{equation} 
Since $\kappa _\delta (s)$ is monotone increasing, 
for any $\omega \in E_n^u$ and $s \in [n,n+1]$, 
\begin{eqnarray*} 
	\left| \bfX _s ^{(n p , u)} (\omega ) \right| 
	&\ge & 
	\left| \bfX _n ^{(n p , u)} (\omega ) \right| 
	- 
	\left| 
		\bfX _s ^{(n p , u)} (\omega ) - \bfX _n ^{(n p , u)} (\omega)
	\right|  
	\\
	&\ge & 
	2 \kappa_\delta (n+1) - \kappa _\delta (n) 
	\ge  
	\kappa _\delta (n+1) \ge \kappa _\delta (s).
\end{eqnarray*}
Thus, 
\begin{equation}
	E_n ^u 
	\subset 
	\left\{ 
		\left| \bfX _s ^{(n p , u)} \right| 
		> 
		\kappa _\delta (s) \text{ for all } s \in [n,n+1]
	\right\} .
	\label{eq:3-29}
\end{equation}
By using the inclusion relation \eqref{eq:3-29}, 
\begin{eqnarray*}
	N_t ^{\kappa _\delta (t)} 
	&=&
	\sum _{u \in Z_t} 
		\indi _{[\kappa _\delta (t) , \infty)} 
		\left( \left| \bfX _t ^u \right| \right)
	\\
	&\ge &  
	\sum _{u \in Z_{[t]p}} 
	\indi _{\left\{ \left| \bfX _s ^{([t]p,u)} \right| 
		> \kappa _\delta (s) \text{ for all } s \in 
	[[t], [t]+1] \right\} } 
	\\ 
	&\underset{\eqref{eq:3-29}}{\ge} & 
	\sum _{u \in Z_{[t]p}} \indi _{E_{[t]}^u} .
\end{eqnarray*}
Therefore, by \eqref{eq:20230306-6}, 
\[
	\liminf _{t \to \infty} 
	\frac1t \log 
	N_t ^{\kappa _\delta (t)} 
	\ge 
	\liminf _{t \to \infty}
	\frac{1}{[t]} \log \left( 
	\sum _{u \in Z_{[t]p}} 
	\indi _{E_{[t]} ^u}
	\right)
	> 
	- \lambda - \alpha \delta - \varepsilon, 
\] 
$\prob _x (\cdot \mid M_\infty > 0)$-a.s. 
Finally, letting $\varepsilon \downarrow 0$, 
we have our claim. 
\end{proof} 

We now prove Theorem \ref{theo:main}. 
\begin{proof}
(i) By Lemma \ref{lem:limsup}, 
for any $\delta > -\lambda /\alpha$, 
$$ 
	\begin{aligned}
	\limsup _{t \to \infty} 
	\left( N_t ^{\kappa _\delta (t)} \right) ^{1/t}
	&= 
	\limsup _{t \to \infty} 
	\exp \left( \frac{1}{t} \log N_t ^{\kappa _\delta (t)} \right)
	\\
	&=
	\exp \left( 
		\limsup _{t \to \infty}
		\frac{1}{t} \log N_t ^{\kappa _\delta (t)} 
	\right)
	\\ 
	&\le
	e^{-\lambda - \alpha \delta} < 1, \quad 
	\prob _x \text{-a.s.}
	\end{aligned}
$$ 
Thus, we have 
$\lim_{t \to \infty} N_t ^{\kappa_\delta (t)} = 0$, 
$\prob _x$-a.s. 

\noindent
(ii) Let $\delta \in (0, -\lambda / \alpha)$.  
By Lemma \ref{lem:limsup}, 
\[
	\limsup _{t \to \infty} 
	\frac1t \log 
	N _t ^{\kappa _\delta (t)} 
	\le 
	- \lambda - \alpha \delta , \qquad 
	\prob _x\text{-a.s.}
\]
By Lemma \ref{lem:liminf}, 
\[
	\liminf _{t \to \infty} 
	\frac1t \log 
	N _t ^{\kappa _\delta (t)} 
	\ge 
	- \lambda - \alpha \delta , \qquad 
	\prob _x (\cdot \mid M_\infty >0)\text{-a.s.}
\]
These show the second claim. 
\end{proof}

\begin{corollary}\label{cor}
We make the same assumptions as Theorem \ref{theo:main} 
and we assume $\prob_x (M_\infty >0) > 0$. 
Then,  
\[
	\lim _{t \to \infty} 
	\dfrac{1}{t} \log L_t
	= 
	\dfrac{-\lambda}{\alpha}, \qquad 
	\prob _x (\cdot \mid M_\infty > 0)\text{-a.s.}
\]
\end{corollary}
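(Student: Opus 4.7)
The plan is to derive Corollary~\ref{cor} directly from Theorem~\ref{theo:main} by squeezing $L_t$ between two exponentially growing envelopes. First I fix any admissible weight $a(t)$, for instance $a(t)=1+t$, so that $\kappa_\delta(t)=e^{\delta t}a(t)$ meets the running hypothesis $t^{-1}\log a(t)\to 0$.

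For the upper bound, I take a decreasing sequence $\delta_n\downarrow -\lambda/\alpha$ with $\delta_n>-\lambda/\alpha$. Part (i) of Theorem~\ref{theo:main} yields $N_t^{\kappa_{\delta_n}(t)}\to 0$ $\prob_x$-a.s. Since $N_t^{\kappa_{\delta_n}(t)}$ is a non-negative integer, there is a random $T_n$ with $N_t^{\kappa_{\delta_n}(t)}=0$ for all $t\ge T_n$, which means every particle sits inside $B(\kappa_{\delta_n}(t))$ and hence $L_t<\kappa_{\delta_n}(t)$. Taking logarithms gives
\[
\limsup_{t\to\infty}\frac{1}{t}\log L_t\le \delta_n+\lim_{t\to\infty}\frac{\log a(t)}{t}=\delta_n,\qquad \prob_x\text{-a.s.}
\]
Intersecting over the countable family $\{\delta_n\}_{n\ge 1}$ and letting $n\to\infty$ yields $\limsup t^{-1}\log L_t\le -\lambda/\alpha$, $\prob_x$-a.s.

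For the lower bound, I take an increasing sequence $\delta_n\uparrow -\lambda/\alpha$ with $\delta_n\in(0,-\lambda/\alpha)$. Part (ii) gives $t^{-1}\log N_t^{\kappa_{\delta_n}(t)}\to -\lambda-\alpha\delta_n>0$ on $\{M_\infty>0\}$, so $N_t^{\kappa_{\delta_n}(t)}\ge 1$ for all sufficiently large $t$; consequently at least one particle lies outside $B(\kappa_{\delta_n}(t))$, forcing $L_t\ge \kappa_{\delta_n}(t)$ eventually. Therefore
\[
\liminf_{t\to\infty}\frac{1}{t}\log L_t\ge \delta_n
\]
on $\{M_\infty>0\}$, and sending $n\to\infty$ gives $\liminf t^{-1}\log L_t\ge -\lambda/\alpha$ on that event. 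Combining both bounds proves the corollary.

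There is no substantive obstacle here: given the work already invested in Theorem~\ref{theo:main}, the corollary is essentially a bookkeeping exercise. The one mild point to be careful about is the integer-valuedness argument used to upgrade ``$N_t^{\kappa_{\delta_n}(t)}\to 0$'' to ``$L_t<\kappa_{\delta_n}(t)$ eventually''; beyond that, the proof reduces to matching up two simple limits.
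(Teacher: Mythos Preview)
Your proof is correct and follows essentially the same route as the paper's: squeeze $L_t$ between $\kappa_\delta(t)$ for $\delta$ above and below $-\lambda/\alpha$, using the integer-valuedness of $N_t^{\kappa_\delta(t)}$ to convert the limits from Theorem~\ref{theo:main} into eventual inequalities for $L_t$. Your explicit use of countable sequences $\delta_n$ is a small but welcome tightening of the paper's argument, which passes to the limit in $\delta$ without comment.
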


\begin{proof}
Let $\delta > - \lambda /\alpha$. 
By Theorem \ref{theo:main} (i), 
for any $\varepsilon \in (0,1)$, 
\[
	\prob _x \left( 
		\lim _{t \to \infty} 
		N_t ^{\kappa _\delta (t)} 
		\le
		 \varepsilon 
	\right)
	= 1  .
\]
That is, for almost all $\omega$, 
there exists $T_0 (\omega) >0$ such that  
$N_t ^{\kappa _\delta (t)} (\omega) \le \varepsilon$, 
for any $t > T_0 (\omega)$. 
That is, $N_t ^{\kappa _\delta (t)} (\omega) = 0$, 
for any $t > T_0 (\omega)$. 
It follows that 
all particles are contained in $B(\kappa _\delta (t))$, for $t > T_0 (\omega)$.  
Namely, 
\[
	L_t (\omega) \le \kappa _\delta (t) = e^{\delta t} a(t) , 
	\quad \text{for all } t  > T_0 (\omega) 
\]
and we have 
$\limsup _{t \to \infty} t^{-1} \log L_t \le \delta$, 
$\prob _x$-a.s. 
By letting $\delta \downarrow - \lambda / \alpha$, 
\begin{equation} 
	\limsup _{t \to \infty} 
	\frac1t \log L_t 
	\le 
	\dfrac{-\lambda}{\alpha}, 
	\quad 
	\prob_x \text{-a.s.}
	\label{eq:20230307-1}
\end{equation}
When $\delta \in ( 0 , - \lambda / \alpha)$, 
Lemma \ref{lem:limsup} provides that 
$N_t ^{\kappa _\delta (t)} \to \infty$, 
$\prob _x (\cdot \mid M_\infty >0)$-a.s. 
That is, the maximal displacement is greater than $\kappa _\delta (t)$: 
\[
	L_t \ge \kappa _\delta (t) = e^{\delta t} a(t), \quad 
	t \gg 1 , \quad \prob _x (\cdot \mid M_\infty >0)\text{-a.s.}
\] 
Therefore, 
\[
	\liminf _{t \to \infty} 
	\frac1t \log L_t \ge \delta , 
	\quad \prob _x (\cdot \mid M_\infty > 0)\text{-a.s.} 
\]
Taking $\delta \uparrow -\lambda / \alpha$, we obtain 
\begin{equation}
	\liminf _{t \to \infty} 
	\frac1t \log L_t
	 \ge 
	\dfrac{-\lambda}{\alpha}, 
	\quad 
	\prob_x (\cdot \mid M_\infty > 0)\text{-a.s.}
	\label{eq:20230307-2}
\end{equation}
According to \eqref{eq:20230307-1} and \eqref{eq:20230307-2}, 
we complete the proof. 
\end{proof}

\appendix

\section{Proof of Lemma \ref{lem:runningmax}}\label{appendix1}
More general cases appeared in \cite{KR}. 
Here, we use the fundamental argument (e.g., \cite[Section 2.8.A]{KS}).
\begin{proof}
Let $\kappa >0$. 
We see from $\{ |X_t| \ge \kappa \} \subset \{ \calM_t \ge \kappa \}$ 
that $P ( |X _t| \ge \kappa ) \le P ( \calM _t \ge \kappa )$,
and thus, 
the left side inequality of (\ref{eq:lem}) holds. 
We next show that the right side of (\ref{eq:lem}) holds. 
For any $|x| < \kappa$, 
\begin{eqnarray}
	P_x ( \calM _t \ge \kappa ) 
	&=&
	P_x ( \calM _t \ge \kappa , \ |X_t| \le \kappa )
	+
	P_x ( \calM _t \ge \kappa , \ |X_t| > \kappa )
	\nonumber
	\\
	&= &
	P_x ( \calM _t \ge \kappa , \ |X_t| \le \kappa )
	+
	P_x (|X_t| > \kappa) .
	\label{eq:divide}
\end{eqnarray} 
Let us set 
$T_\ell = \inf \left\{ t> 0 : |X_t| \ge \ell \right\}$, 
$\ell \ge 0$. 
By the strong Markov property,  
\begin{equation}
	P_x ( \calM _t \ge \kappa , \ |X_t| \le \kappa ) 
	= 
	P_x ( T_\kappa \le t , \ |X_t| \le \kappa ) 
	=
	E_x \left[
		\left. 
		P _{X_s} 
			\left( \left| X_{t - s} \right| \le \kappa \right) 
		\right|_{s = T_\kappa}
		\ ; \ T_\kappa \le t
	\right] . 
	\label{eq:20221027-1}
\end{equation} 
We see from the spatial homogeneity of the stable process that, 
for $|w| \ge \kappa$ and $u > 0$, 
\begin{equation}
	P_w ( |X_u| \le \kappa )
	\le 
	P_w ( |X_u| \ge \kappa ) .
	\label{eq:20221027-2}
\end{equation}
Indeed, if $|w| \ge \kappa$, 
then $\{ z : |2 w - z | \le \kappa \} \subset \{z : \kappa \le |z|\}$. 
Thus, 
\begin{eqnarray*}
	&& 
	P_w ( |X_u| \le \kappa )
	=
	\int _{|y| \le \kappa} 
		u^{-d/\alpha} g \left( \frac{|w-y|}{u^{1/\alpha}} \right)
	\, d y
	=
	\int _{|y| \le \kappa } 
		u^{-d/\alpha} g \left( \frac{|w-(2w -y)|}{u^{1/\alpha}} \right) 
	\, d y
	\\
	&=& 
	\int _{|2 w - z| \le \kappa } 
		u^{-d/\alpha} g \left( \frac{|w-z|}{u^{1/\alpha}} \right) 
	\, d z 
	\le 
	\int _{|z| \ge \kappa } 
		u^{-d/\alpha} g \left( \frac{|w-z|}{u^{1/\alpha}} \right)
	\, d z
	=
	P_w ( | X_u | \ge \kappa ) .
\end{eqnarray*}
Substituting $X_{T_\kappa}$ and $t-T_\kappa$ for $w$ and $u$ 
in \eqref{eq:20221027-2}, respectively, 
furthermore, combining (\ref{eq:20221027-1}) with (\ref{eq:20221027-2}), 
we obtain 
\[
	P_x ( \calM _t \ge \kappa , \ |X_t| \le \kappa ) 
	\le 
	E_x \left[
		\left. P _{X_s} 
			\left( \left| X_{t - s} \right| \ge \kappa \right) 
		\right| _{s = T_\kappa}
		\ ; \ 
		T_\kappa \le t
	\right] 
	= 
	P_x ( |X_t| \ge \kappa , \ T_\kappa \le t) .
\]
Since  $\{ |X_t| \ge \kappa \} \subset \{ T_\kappa \le t\}$, 
the right side above is equal to 
$ P (|X_t| \ge \kappa )$, that is,  
$$
	P_x ( \calM _t \ge \kappa , \ |X_t| \le \kappa ) 
	\le 
	P_x ( |X_t| \ge \kappa ) .
$$ 
We have that 
the first term on  the right side of (\ref{eq:divide}) is bounded above by 
$P_x (|X_t| \ge \kappa )$, and thus 
$
	P_x (\calM_t \ge \kappa ) 
	\le 
	2 P_x (|X_t| \ge \kappa )
$. 
This is the right-side inequality of (\ref{eq:lem}). 
\end{proof}

\renewcommand{\abstractname}{Acknowledgements}
\begin{abstract}
	The author thanks Professor Yuichi SHIOZAWA 
	for his helpful comments and suggestions.  
	The author also would like to thank Christopher B. Prowant 
	for carefully proofreading the manuscript. 
\end{abstract}

\end{document}